\def\b{\ensuremath\boldsymbol}
\newtheorem{theorem}{Theorem}
\newtheorem{lemma}{Lemma}
\newtheorem{definition}{Definition}
\newtheorem{assumption}{Assumption}
\newtheorem{proposition}{Proposition}
\newtheorem{corollary}{Corollary}
\newtheorem{remark}{Remark}
\title{\LARGE \bf
Distributed Nonlinear Model Predictive Control and Metric Learning for Heterogeneous Vehicle Platooning with Cut-in/Cut-out Maneuvers 
}
\author{Mohammad Hossein Basiri, Benyamin Ghojogh, Nasser L. Azad,~\IEEEmembership{Member,~IEEE}, \\
Sebastian Fischmeister,~\IEEEmembership{Member,~IEEE},
Fakhri Karray,~\IEEEmembership{Fellow,~IEEE}, Mark Crowley,~\IEEEmembership{Member,~IEEE} \\
\thanks{N. L. Azad is with the Department of Systems Design Engineering, University of Waterloo, ON, Canada. The other authors are with the Department of Electrical and Computer Engineering, University of Waterloo, ON, Canada. 
M. H. Basiri and N. L. Azad are with the Smart Hybrid and Electric Vehicles Systems lab, M. H. Basiri and S. Fischmeister are with the Real-time Embedded Software lab, B. Ghojogh and M. Crowley are with the Machine Learning lab, and F. Karray is with the centre for Pattern Analysis and Machine Intelligence.}%
\thanks{Email: \{mh.basiri, bghojogh, nlashgar, sfischme, karray, mcrowley\}@uwaterloo.ca}%
}
\begin{document}

% the note above the first page:
\AddToShipoutPictureBG*{%
  \AtPageUpperLeft{%
    \setlength\unitlength{1in}%
    \hspace*{\dimexpr0.5\paperwidth\relax}
    \makebox(0,-0.75)[c]{\normalsize Accepted for presentation at the 59th IEEE Conference on Decision and Control (CDC) 2020}
    }}

\maketitle
\thispagestyle{empty}
\pagestyle{empty}

%%%%%%%%%%%%%%%%%%%%%%%%%%%%%%%%%%%%%%%%%%%%%%%%%%%%%%%%%%%%%%%%%%%%%%%%%%%%%%%%
\begin{abstract}
Vehicle platooning has been shown to be quite fruitful in the transportation industry to enhance fuel economy, road throughput, and driving comfort. Model Predictive Control (MPC) is widely used in literature for platoon control to achieve certain objectives, such as safely reducing the distance among consecutive vehicles while following the leader vehicle. In this paper, we propose a Distributed Nonlinear MPC (DNMPC), based upon an existing approach, to control a heterogeneous dynamic platoon with unidirectional topologies, handling possible cut-in/cut-out maneuvers. The introduced method addresses a collision-free driving experience while tracking the desired speed profile and maintaining a safe desired gap among the vehicles. The time of convergence in the dynamic platooning is derived based on the time of cut-in and/or cut-out maneuvers. In addition, we analyze the improvement level of driving comfort, fuel economy, and absolute and relative convergence of the method by using distributed metric learning and distributed optimization with Alternating Direction Method of Multipliers (ADMM). Simulation results on a dynamic platoon with cut-in and cut-out maneuvers and with different unidirectional topologies show the effectiveness of the introduced method.
\end{abstract}

\section{Introduction}
Autonomous driving has been getting much attention during recent years due to its capability to improve the safe and reliable driving experience without the need for human resources. One of the significant advantages of these systems is to form a string of autonomous connected vehicles, called a platoon, all tracking the same shared speed profile generated by the Leader Vehicle (LV). The vehicles participating in a platoon can exchange data through various communication topologies such as Predecessor-Follower (PF), Predecessor-Leader Follower (PLF), Two Predecessors-Follower (TPF), and Two Predecessors-Leader Follower (TPLF) \cite{li2015overview} (see Fig. \ref{figure_topologies}).

\begin{figure}[!t]
\centering
\includegraphics[width=3in]{./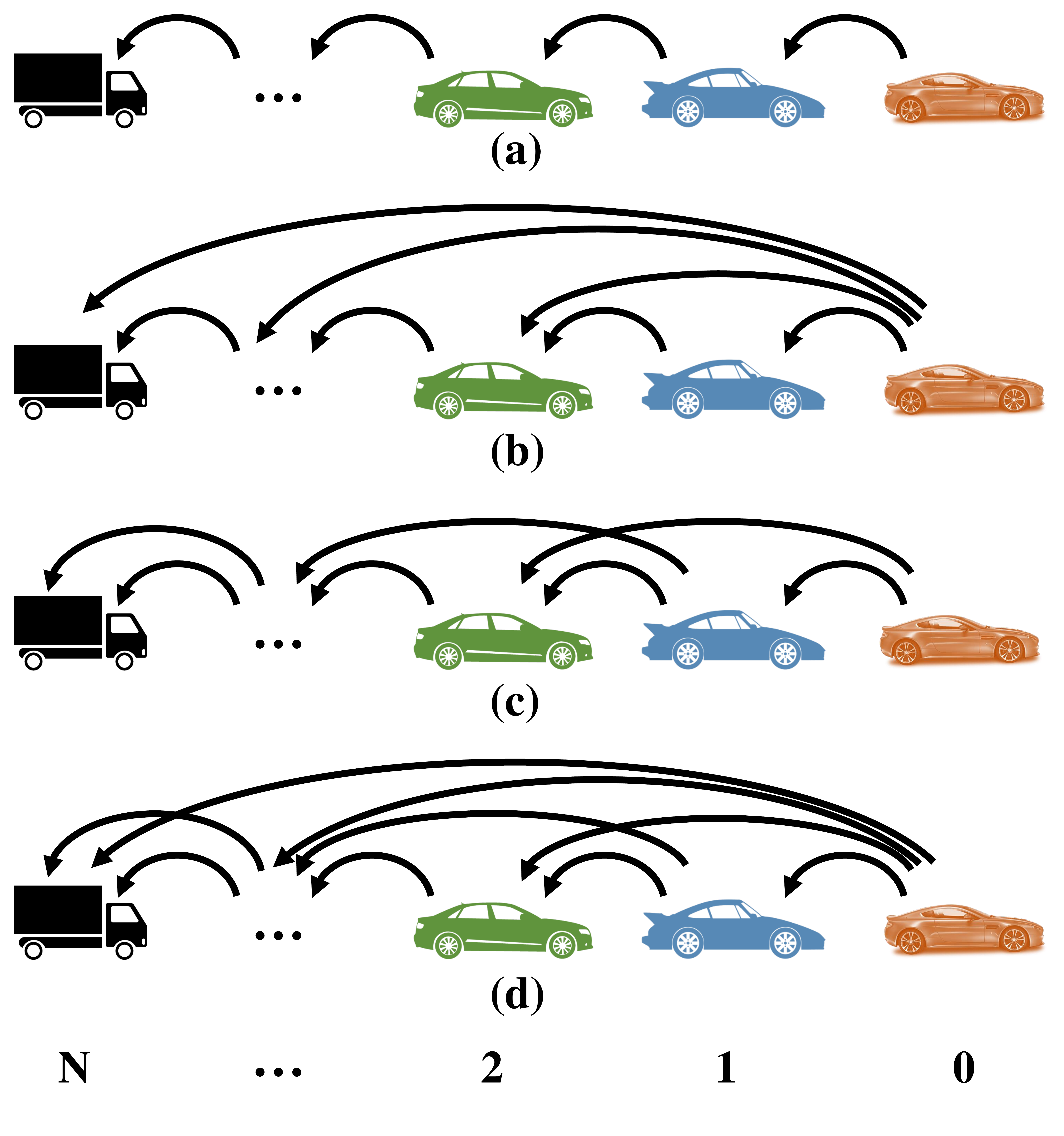}
\vspace{-.5cm}
\caption{Unidirectional topology: (a) PF, (b) PLF, (c) TPF, and (d) TPLF, (vehicle 0 is LV).}
\label{figure_topologies}
\end{figure}

In platoon control, it is required to guarantee specific control objectives such as the same velocity for the followers and a safe desired gap between any two consecutive vehicles. Over the years, researchers have introduced many different control techniques through Advanced Driver Assistance Systems (ADAS) to achieve desired control specifications \cite{li2019distributed,hu2020cooperative}. Commercially used Adaptive Cruise Control (ACC) and its more developed version, Cooperative ACC (CACC) \cite{milanes2016handling}, are to enhance traffic flow capacity. In this context, Model Predictive Control (MPC) is one of the prominent candidates to meet the control objectives while ensuring a reasonable amount of computational complexity \cite{dunbar2011distributed, sakhdari2018adaptive}. Basically, MPC formulates the control problem as an optimization problem with the control requirements as its constraints. 
Dynamic platooning is more challenging than static platooning because of possible cut-in/cut-out maneuvers \cite{kazerooni2015interaction,min2019constrained}. 
Most of the works in literature employ a linear system together with a linear control structure because of the complicated essence of dynamic platooning. Linear MPC \cite{shi2017distributed,kazemi2018learning}, PD \cite{lam2013cooperative,milanes2014cooperative}, and PID \cite{dasgupta2017merging} controllers are some examples.
Some works like \cite{goli2019mpc} have used linear approximations of the nonlinear model. 
Using a nonlinear model for dynamic control increases the complexity; however, exploiting the distributed version of NMPC can compensate for this complexity. 
Moreover, most of the works in dynamic platooning are proposed for homogeneous vehicles \cite{shi2017distributed,kazemi2018learning,lam2013cooperative,milanes2014cooperative,dasgupta2017merging,goli2019mpc}.
Also, some of the research works are solely able to handle cut-in (and not cut-out) maneuvers \cite{kazerooni2015interaction,dasgupta2017merging}.

Our contributions are explicitly two-fold.
Firstly, we propose a method, based on an existing Distributed Nonlinear MPC (DNMPC) \cite{zheng2017distributed}, for dynamic platooning. Our method is general for heterogeneous vehicles and can handle both cut-in and cut-out maneuvers while tracking the desired speed trajectory and ensuring the safe desired gap between any two consecutive vehicles. 
Secondly, we analyze driving experience, including driving comfort, fuel economy, and absolute and relative convergence by analyzing the metrics in DNMPC \cite{gechter2020platoon} using distributed metric learning \cite{kulis2013metric} and Alternating Direction Method of Multipliers (ADMM) optimization \cite{boyd2011distributed} which is found to be useful for MPC \cite{mota2012distributed}. 

The paper is organized as follows. In Section \ref{section_platoon_control}, we review the system modeling and the DNMPC proposed by \cite{zheng2017distributed}. 
Section \ref{section_preliminaries} introduces the preliminaries and background for stability, metric learning, and optimization. 
The extension of the DNMPC to the dynamic platooning is presented in Section \ref{section_dynamic_platoon}. 
Section \ref{section_metric_learning} analyzes the driving experience using distributed metric learning. Simulation results are provided in Section \ref{section_simulations}. Finally, Section \ref{section_conclusion} concludes the paper and provides some future directions.

\section{System Modeling}\label{section_platoon_control}

\subsection{Platoon Modeling}
Consider a platoon of vehicles, including an LV and $N$ Follower Vehicles (FVs) indexed by $\mathcal{N} := \{1, \dots, N\}$. In this paper, as in \cite{zheng2017distributed}, we consider the longitudinal dynamics and unidirectional communication topologies. 
Let $\Delta t$ be the discrete time interval and $s_i(t)$, $v_i(t)$, and $T_i(t)$ denote the position, velocity, and the integrated driving/breaking torque of the $i$-th FV at time $t$, respectively. 
For the $i$-th FV, we denote the vehicle's mass, the coefficient of aerodynamic drag, the coefficient of rolling resistance, the inertial lag of longitudinal dynamics, the tire radius, the mechanical efficiency of the driveline, and the control input by $m_i$, $C_{A,i}$, $f_i$, $\tau_i$, $r_i$, $\eta_i$, and $u_i(t) \in \mathbb{R}$, respectively. If $g$ is the gravity constant, the dynamics of the $i$-th FV are $\b{x}_i(t+1) = \b{\phi}_i(\b{x}_i(t)) + u_i(t)\, \b{\psi}_i$ and $\b{y}_i(t) = \b{\gamma}\, x_i(t)$ where $\b{x}_i(t) := [s_i(t), v_i(t), T_i(t)]^\top \in \mathbb{R}^3$ and $\b{y}(t) := [s_i(t), v_i(t)]^\top \in \mathbb{R}^2$ are the state and the control output, respectively, and $\b{\psi}_i := [0, 0, (1/\tau_i)\, \Delta t]^\top$, $\b{\gamma} := 
\begin{bmatrix}
1 & 0 & 0\\
0 & 1 & 0
\end{bmatrix}
$, and $\b{\phi}_i(\b{x}_i(t)) := [s_i(t) + v_i(t)\, \Delta t, v_i(t) + \frac{\Delta t}{m_i} (\frac{\eta_i}{r_i} T_i(t) - \Gamma_i(v_i(t))), T_i(t) - (1 / \tau_i)\, T_i(t)\, \Delta t]^\top$ where $\Gamma_i\big(v_i(t)\big) := C_{A,i}\, v_i^2(t) + m_i\, g\, f_i$ \cite{zheng2017distributed}. 

Let $\b{A} = [a_{ij}] \in \mathbb{R}^{N \times N}$ be the adjacency matrix where $a_{ij} = 1$ ($=0$) means that the $j$-th FV can (cannot) send information to the $i$-th FV. 
Also, let $p_i = 1$ ($=0$) mean that the $i$-th FV is (not) pinned to the LV and gets (does not get) information from it. Suppose $\mathbb{P}_i := \{0\}$ if $p_i = 1$ and $\mathbb{P}_i := \varnothing$ if $p_i = 0$.
We denote $\mathbb{N}_i := \{j | a_{ij} = 1, j \in \mathcal{N}\}$ and $\mathbb{O}_i := \{j | a_{ji} = 1, j \in \mathcal{N}\}$ as the sets of FVs which the $i$-th FV can get information from and send information to, respectively. The set $\mathbb{I}_i := \mathbb{N}_i \cup \mathbb{P}_i$ is the set of all vehicles sending information to the $i$-th FV. 

\subsection{Distributed Nonlinear Model Predictive Control}

\subsubsection{Preliminaries}

Let $s_0(t)$ and $v_0(t)$ be the position and velocity of LV, which is assumed to have a constant speed. 
For the $i$-th FV, the desired state and control signal are $\b{x}_{\text{des},i}(t) := [s_{\text{des},i}(t), v_{\text{des},i}(t), T_{\text{des},i}(t)]^\top$ and $u_{\text{des},i}(t) := T_{\text{des},i}(t)$, respectively, where $s_{\text{des},i}(t) := s_0(t) - i\, d_0$, $v_{\text{des},i}(t) := v_0$, $T_{\text{des},i}(t) := h_i(v_0)$ where $h_i(v_0) := (r_i / \eta_i) (C_{A,i}\, v_0^2 + m_i\, g\, f_i)$ is the external drag. The desired output is $\b{y}_{\text{des},i}(t) := \b{\gamma}\, \b{x}_{\text{des},i}(t) \in \mathbb{R}^2$. 
Note that Assumption \ref{assumption_spanning_tree}, introduced later in Section \ref{section_preliminaries}, is assumed here.

The goal of the control is to track the pace of the leader while having the desired distance between the vehicles in the platoon. In other words, we would like to have $\lim_{t \rightarrow \infty} |v_i(t) - v_0(t)| = 0$ and $\lim_{t \rightarrow \infty} |s_{i-1}(t) - s_i(t) - d| = 0$ where $d$ is the desired distance between every two consecutive vehicles \cite{zheng2017distributed}. We also denote the distance of the $i$-th and $j$-th FVs by $d_{i.j}$.

In this problem, we have two types of output, which are the predicted and the assumed outputs. 
The former is obtained by the calculated control input from optimization, which is fed to the system. The latter is obtained by shifting the optimal output of the last-step optimization problem. 
Let $\b{y}_i^p(k|t)$ and $\b{y}_i^a(k|t)$ denote the predicted output and the assumed output, respectively. 
We explain the calculation of these two outputs in the following sections. The predicted and assumed states are denoted by $\b{x}_i^p(k|t)$ and $\b{x}_i^a(k|t)$, respectively. 

\subsubsection{Optimization for Control}

Consider a predictive horizon $N_p$ for the predictive control of the platoon. 
Suppose the predicted control inputs over the horizon are $\mathcal{U}_i^p(t) := \{u_i^p(0|t), \dots, u_i^p(N_p-1|t)\}$ which need to be found by the follwoing optimization problem \cite{zheng2017distributed}: % https://tex.stackexchange.com/questions/387968/auto-numbering-constraints-in-optimization-problems
\begin{mini!}|l|[2]             
    {\mathcal{U}_i^p(t)}                 
    {J_i(\b{y}_i^p, u_i^p, \b{y}_i^p, \b{y}_{-i}^p) \label{equation_optimization_DNMPC_obj}}  
    {\label{equation_optimization_DNMPC}}             
    {}                                
    \addConstraint{\b{x}_i^p(k+1|t)\!}{=\! \b{\phi}_i(\b{x}_i^p(k|t))\! + u_i^p(k|t)\, \b{\psi}_i \label{equation_optimization_DNMPC_cons1}}    
    \addConstraint{\b{y}_i^p(k|t)}{= \b{\gamma}\, \b{x}_i^p(k|t) \label{equation_optimization_DNMPC_cons2}}  
    \addConstraint{\b{x}_i^p(0|t)}{= \b{x}_i(t) \label{equation_optimization_DNMPC_cons3}} 
    \addConstraint{u_i^p(k|t)}{\in [u_{\min,i}, u_{\max,i}]
    \label{equation_optimization_DNMPC_cons4}} 
    \addConstraint{\b{y}_i^p(N_p|t)}{= \frac{1}{|\mathbb{I}_i|} \sum_{j \in \mathbb{I}_i} \big(\b{y}_j^a(N_p|t) + \b{\widetilde{d}}_{i,j}\big)
    \label{equation_optimization_DNMPC_cons5}} 
    \addConstraint{T_i^p(N_p | t)}{= h_i(v_i^p(N_p|t))
    \label{equation_optimization_DNMPC_cons6}},
\end{mini!}
where $\b{y}_{-i}(t) := [\b{y}_{i_1}^\top, \dots, \b{y}_{i_m}^\top]^\top$ (if $\{i_1, \dots, i_m\} := \mathbb{N}_i$), $u_{\min,i}$ and $u_{\max,i}$ are the lower and upper bounds of the control input, $|\mathbb{I}_i|$ is the cardinality of $\mathbb{I}_i$, and $\b{\widetilde{d}}_{i,j} := [d_{i.j}, 0]^\top$. 
For the intuitions of the constraints in Eq. (\ref{equation_optimization_DNMPC}), refer to \cite{zheng2017distributed}. 

The objective function (\ref{equation_optimization_DNMPC_obj}) is the summation of local cost functions as a Lyapunov function \cite{zheng2017distributed}: 
\begin{align}\label{equation_objective}
&J_i(\b{y}_i^p, u_i^p, \b{y}_i^a, \b{y}_{-i}^a) := \sum_{k=0}^{N_p - 1} \Big( \|\b{y}_i^p(k|t) - \b{y}_{\text{des},i}(k|t)\|_{\b{Q}_i} \nonumber \\
&~~~~~ + \|u_i^p(k|t) - h_i(v_i^p)\|_{R_i} + \|\b{y}_i^p (k|t) - \b{y}_i^a(k|t)\|_{\b{F}_i} \nonumber \\
&~~~~~ + \sum_{j \in \mathbb{N}_i} \|\b{y}_i^p (k|t) - \b{y}_j^a(k|t) + \b{\widetilde{d}}_{i,j}\|_{\b{G}_i} \Big),
\end{align}
in which, for a weight matrix $\b{A} \succeq 0$ (where $\succeq 0$ means belonging to the positive semi-definite cone), we define $\|\b{x}\|_{\b{A}} := \b{x}^\top \b{A}\, \b{x}$.
In Eq. (\ref{equation_objective}), we have $0 \preceq \b{Q}_i, \b{F}_i, \b{G}_i \in \mathbb{R}^{2 \times 2}$ and $0 \leq R_i \in \mathbb{R}$ are the weight matrices which act as regularization. The matrices $\b{Q}_i$, $R_i$, $\b{F}_i$, $\b{G}_i$ regularize the amount of penalty for deviation from the desired output $\b{y}_{\text{des},i}(k|t)$, deviation of the control input from the equilibrium, deviation from the assumed output, and deviation from the trajectories of vehicle's neighbors, respectively.
The optimization problem (\ref{equation_optimization_DNMPC}) can be solved using the interior point method \cite{boyd2004convex}. 
For the algorithm of the DNMPC using Optimization (\ref{equation_optimization_DNMPC}), please refer to \cite{zheng2017distributed}.

\section{Preliminaries and Background}\label{section_preliminaries}

\subsection{Preliminaries for Stability}

In the following, the preliminary lemmas, from the baseline paper \cite{zheng2017distributed}, are mentioned. These lemmas are required for the next coming theories on stability of the proposed dynamic platooning. 

\begin{assumption}\label{assumption_spanning_tree}
The directed graph of the platoon topology contains a spanning tree rooted at the LV. 
This assumption is necessary for stability in both homogeneous \cite{zheng2016stability} and heterogeneous \cite{zheng2017distributed} platooning. This ensures that all vehicles get the leader's information either directly or indirectly. 
\end{assumption}

\begin{lemma}[\!\!{\cite[Theorem 2]{zheng2017distributed}}]\label{lemma_DNMPC_convergence_in_N_steps}
If Assumption \ref{assumption_spanning_tree} is satisfied, then Problem (\ref{equation_optimization_DNMPC}) guarantees convergence of the output to the desired output in at most $N$ time steps, i.e., $\b{y}_i^p(N_p | t) = \b{y}_{\text{des},i}(N_p | t), \forall t \geq N$, for a static platoon (without any dynamic maneuvers).
\end{lemma}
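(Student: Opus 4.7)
The plan is to exploit the terminal equality constraint (\ref{equation_optimization_DNMPC_cons5}) together with Assumption \ref{assumption_spanning_tree}, and then induct along the depth of the spanning tree rooted at the LV. The guiding intuition is that a ``converged'' terminal output propagates outward from the LV at one hop per time step, so after at most $N$ hops every FV is locked onto its desired terminal output.

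As a first step I would establish a \emph{lock-in} property: if at some time $t$ every vehicle $j \in \mathbb{I}_i$ satisfies $\b{y}_j^a(N_p|t) = \b{y}_{\text{des},j}(N_p|t)$, then constraint (\ref{equation_optimization_DNMPC_cons5}) forces $\b{y}_i^p(N_p|t) = \b{y}_{\text{des},i}(N_p|t)$. This reduces to checking the identity $\b{y}_{\text{des},j}(N_p|t) + \b{\widetilde{d}}_{i,j} = \b{y}_{\text{des},i}(N_p|t)$, which holds because $s_{\text{des},i}(t) = s_0(t) - i\,d_0$ and $v_{\text{des},i}(t) = v_0$ are affine in the index $i$ and the offsets $\b{\widetilde{d}}_{i,j}$ are chosen consistently with the desired spacing. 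The averaging on the right-hand side of (\ref{equation_optimization_DNMPC_cons5}) then collapses to $\b{y}_{\text{des},i}(N_p|t)$ independently of $|\mathbb{I}_i|$. A feasibility argument for the optimizer (using a shifted version of the previous-step solution concatenated with the steady-state control given by (\ref{equation_optimization_DNMPC_cons6})) shows that this equality can in fact be attained.

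Next, I would invoke the shift relation between the assumed and predicted trajectories: by construction, $\b{y}_i^a(\cdot|t+1)$ is obtained by shifting $\b{y}_i^p(\cdot|t)$ forward and closing the last stage with the equilibrium (\ref{equation_optimization_DNMPC_cons6}). Hence once $\b{y}_i^p(N_p|t) = \b{y}_{\text{des},i}(N_p|t)$ holds, the assumed terminal output at time $t+1$ inherits the same equality, and consequently so does the predicted terminal output through the lock-in property. This produces a monotone propagation rule: a vehicle locks in one time step after all of its in-neighbors have locked in, and converged terminal outputs are never lost. Because the LV travels at constant velocity, $\b{y}_0^a(N_p|t) = \b{y}_{\text{des},0}(N_p|t)$ holds for all $t$ by construction. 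An induction on the spanning-tree depth then gives: any vehicle at depth $k$ in the tree locks in by time $t = k$, and since the tree has at most $N$ non-root nodes its depth is at most $N$, yielding convergence within $N$ steps.

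The main technical obstacle is that the lock-in property requires \emph{all} members of $\mathbb{I}_i$ to have converged, not merely the parent of $i$ in a chosen spanning tree, because the averaging in (\ref{equation_optimization_DNMPC_cons5}) mixes every in-neighbor symmetrically. In a general directed graph this would force the induction to be carried out along a topological order of $\mathbb{I}$ rather than along a tree. Fortunately, under the unidirectional topologies considered here (PF, PLF, TPF, TPLF), the set $\mathbb{I}_i$ consists only of predecessors with indices strictly less than $i$ (together with the root when $p_i = 1$), so an induction on the vehicle index aligns cleanly with the spanning-tree depth and closes the argument in at most $N$ steps.
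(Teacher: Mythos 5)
Your argument is sound and, in substance, coincides with the one the paper relies on. The paper does not prove this lemma itself --- it imports it as Theorem 2 of the baseline \cite{zheng2017distributed} --- but the closest thing to a proof in the text is the argument given for Theorem \ref{theorem_DNMPC_convergence_in_N_steps_dynamic}: under a unidirectional topology satisfying Assumption \ref{assumption_spanning_tree}, the adjacency matrix is strictly lower triangular, so the information-flow matrix $(\b{D}+\b{P})^{-1}\b{A}$ governing the terminal-output error recursion induced by constraint (\ref{equation_optimization_DNMPC_cons5}) is nilpotent of degree at most $N$, and the error vanishes in at most $N$ steps. Your lock-in/propagation induction along the spanning-tree depth (equivalently, along the vehicle index) is precisely the combinatorial unrolling of that nilpotency: each vehicle's terminal error at one step is an average of its in-neighbors' errors at the previous step, and strict lower-triangularity guarantees the averaging never reaches back to a not-yet-converged vehicle. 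The matrix formulation buys a one-line degree-of-nilpotency bound and extends mechanically to the dynamic case where $\b{A}$ changes dimension; your induction is more elementary and makes explicit two points the matrix argument quietly assumes, namely the spacing consistency $\b{y}_{\text{des},j}(N_p|t)+\b{\widetilde{d}}_{i,j}=\b{y}_{\text{des},i}(N_p|t)$ that collapses the average, and the shift/inheritance step by which a converged predicted terminal output at time $t$ becomes a converged assumed terminal output at time $t+1$. You also correctly identify the only delicate point --- that the averaging in (\ref{equation_optimization_DNMPC_cons5}) requires \emph{all} of $\mathbb{I}_i$ to have locked in, not just a tree parent --- and your resolution via the index ordering of the unidirectional topologies is exactly the lower-triangularity fact the paper uses. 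The one detail left at the level of a sketch in your write-up is recursive feasibility of the shifted candidate solution, which both your argument and the cited reference need in order for the terminal equality constraint to be attainable at every step.
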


\begin{lemma}[\!\!{\cite[Theorems 3 and 4]{zheng2017distributed}}]\label{lemma_mainProblem_Lyapunov}
The objective function in Problem (\ref{equation_optimization_DNMPC}) is the sum of non-increasing Lyapunov functions; hence, this system is asymptotically stable. 
\end{lemma}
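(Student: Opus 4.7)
The plan is to establish the aggregate cost $V(t) := \sum_{i=1}^{N} J_i^\star(t)$, where $J_i^\star(t)$ denotes the optimal value of the local objective (\ref{equation_objective}) attained by solving Problem (\ref{equation_optimization_DNMPC}) at time $t$, as a Lyapunov function for the closed-loop platoon. The standard distributed-MPC route is to exhibit a feasible (but not necessarily optimal) candidate predicted trajectory at time $t+1$ built by a one-step shift of the optimal trajectory at time $t$, and then to compare the candidate cost to $J_i^\star(t)$ stage by stage.

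First I would define, for each $i$, the shifted candidate $\widehat{u}_i^p(k|t+1) := u_i^{p,\star}(k+1|t)$ for $k=0,\dots,N_p-2$, and fix the terminal input $\widehat{u}_i^p(N_p-1|t+1)$ so that the drag-equilibrium constraint (\ref{equation_optimization_DNMPC_cons6}) is met; the associated state/output trajectories follow from (\ref{equation_optimization_DNMPC_cons1})--(\ref{equation_optimization_DNMPC_cons2}). Crucially, the assumed output at $t+1$ is, by construction, the one-step shift of the optimal predicted output at $t$, i.e., $\b{y}_i^a(k|t+1) = \b{y}_i^{p,\star}(k+1|t)$, so the candidate satisfies $\b{y}_i^p(k|t+1) = \b{y}_i^a(k|t+1)$ for the first $N_p-1$ stages. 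I would then verify that the terminal coupling constraint (\ref{equation_optimization_DNMPC_cons5}) holds for the candidate by invoking $T_i^p(N_p|t) = h_i(v_i^p(N_p|t))$ together with the drag-balance property of $h_i$: at the shifted terminal stage, the input $\widehat{u}_i^p(N_p-1|t+1)$ is chosen precisely so that the longitudinal dynamics produce a steady-state increment $v_i\Delta t$ in position with unchanged velocity and torque, which renders (\ref{equation_optimization_DNMPC_cons5}) satisfied for the candidate.

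Next, I would bound $J_i(\cdot)$ evaluated at the candidate by $J_i^\star(t)$ minus the stage cost at $k=0$ plus a residual terminal contribution. Because the candidate matches the assumed output on the shifted horizon, the $\b{F}_i$-penalty collapses to zero on those stages, and the $\b{G}_i$-penalty at the shifted stages coincides with its value in the previous optimization since neighbors' assumed trajectories are, likewise, shifts of their predicted optima. For the newly appended terminal stage, Assumption \ref{assumption_spanning_tree} together with the terminal constraints (\ref{equation_optimization_DNMPC_cons5})--(\ref{equation_optimization_DNMPC_cons6}) forces the terminal tracking error $\b{y}_i^p(N_p|t+1) - \b{y}_{\text{des},i}(N_p|t+1)$ and the input deviation $u_i^p - h_i(v_i^p)$ to vanish, so the terminal stage contributes nothing. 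Summing over $i \in \mathcal{N}$ and using optimality at $t+1$, I obtain $V(t+1) \le V(t) - \sum_{i=1}^{N} \ell_i(t)$, where $\ell_i(t) \ge 0$ is the discarded zeroth stage cost of vehicle $i$; hence $V$ is non-increasing.

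The main obstacle, and the delicate point I would spend the most care on, is the handling of the cross-coupling through $\b{y}_{-i}^a$: the $\b{G}_i$-terms involve neighbors' assumed outputs, which are themselves shifts of neighbors' previous optima, so the cost decrease relies on all vehicles simultaneously exercising the shift strategy. The spanning-tree hypothesis is exactly what propagates the leader's equilibrium information through $\mathbb{I}_i$ and makes the terminal constraint self-consistent across the network. Once monotonicity is established, asymptotic stability follows in standard fashion: $V$ is bounded below by zero and non-increasing, so $V(t)$ converges and $\ell_i(t) \to 0$ for every $i$; since each $\ell_i$ includes the positive-definite tracking and input-regularization terms, this forces $\b{y}_i(t) \to \b{y}_{\text{des},i}(t)$ and $u_i(t) \to h_i(v_0)$, completing the proof of asymptotic stability to the desired platoon formation.
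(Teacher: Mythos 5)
First, a note on the comparison itself: the paper does not prove this lemma — it imports it wholesale from Theorems 3 and 4 of the cited baseline — so your reconstruction can only be judged against the standard argument there. Your skeleton is the right one and matches that argument: build a one-step-shifted feasible candidate at $t+1$, verify recursive feasibility of the terminal constraints (\ref{equation_optimization_DNMPC_cons5})--(\ref{equation_optimization_DNMPC_cons6}), bound the candidate cost by $J_i^\star(t)$ minus the discarded zeroth-stage cost, and sum over vehicles.

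However, there is a genuine gap at exactly the point you flag as delicate, and your resolution of it is wrong. You assert that ``the $\b{G}_i$-penalty at the shifted stages coincides with its value in the previous optimization since neighbors' assumed trajectories are, likewise, shifts of their predicted optima.'' It does not coincide. At stage $k$ of the candidate at time $t+1$, the $\b{G}_i$-term compares $\b{y}_i^{p,\star}(k+1|t)$ against $\b{y}_j^a(k|t+1) = \b{y}_j^{p,\star}(k+1|t)$, whereas the corresponding term of $J_i^\star(t)$ at stage $k+1$ compares $\b{y}_i^{p,\star}(k+1|t)$ against $\b{y}_j^a(k+1|t)$, which is the shift of vehicle $j$'s optimum from time $t-1$, not from time $t$. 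The mismatch $\b{y}_j^{p,\star}(k+1|t) - \b{y}_j^a(k+1|t)$ is precisely the quantity penalized by $\b{F}_j$ in vehicle $j$'s own cost; the correct argument applies the triangle inequality for the metrics (this is where Lemma \ref{lemma_metric_matrix_positive_definite} is actually used) to leave a residual of the form $\sum_i\big(\sum_{j\in\mathbb{N}_i}\|\b{y}_j^{p,\star}-\b{y}_j^a\|_{\b{G}_i} - \|\b{y}_i^{p,\star}-\b{y}_i^a\|_{\b{F}_i}\big)$ after summing over $i$, and re-indexing the double sum via $\mathbb{O}_i$ shows this residual is non-positive exactly when $\b{F}_i - \sum_{j\in\mathbb{O}_i}\b{G}_j \succeq \b{0}$, i.e., the condition of Lemma \ref{lemma_stability_sufficient_condition}. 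Attributing the fix to the spanning-tree hypothesis is a misdiagnosis: Assumption \ref{assumption_spanning_tree} is what propagates the leader's information through the terminal equality constraints and yields the finite-time terminal convergence of Lemma \ref{lemma_DNMPC_convergence_in_N_steps}; it does not control the cross-coupling in the running cost. A secondary, smaller issue: your final step infers $\b{y}_i \to \b{y}_{\text{des},i}$ from $\ell_i(t)\to 0$ on the grounds that the stage cost is positive definite, but $\b{Q}_i$ is only required to be positive semi-definite (and vanishes for vehicles not pinned to the leader), so attraction to the desired formation must be routed through the terminal constraint and the neighbor-coupling terms rather than through the $\b{Q}_i$ term alone.
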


\begin{lemma}[\!\!{\cite[Theorem 5]{zheng2017distributed}}]\label{lemma_stability_sufficient_condition}
Satisfying Assumption \ref{assumption_spanning_tree}, a sufficient condition for asymptotic stability of the platoon is $\b{F}_i - \sum_{j \in \mathbb{O}_i} \b{G}_j \succeq \b{0}, \forall i \in \mathcal{N}$. 
\end{lemma}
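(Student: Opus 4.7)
The plan is to establish asymptotic stability by a discrete-time Lyapunov argument with the aggregate optimal cost $V(t) := \sum_{i=1}^N J_i^*(t)$ as the Lyapunov candidate. By Lemma~\ref{lemma_mainProblem_Lyapunov}, each $J_i$ is already a non-increasing local Lyapunov function, so the task here is to isolate the cross-vehicle coupling induced by the $\b{G}_i$ terms and identify exactly what pointwise matrix inequality on the weights forces the global sum to be non-increasing as well.

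First I would construct, at time $t+1$, a feasible (generally suboptimal) candidate input by the standard receding-horizon shift: reuse $u_i^p(k|t)$ for $k=1,\dots,N_p-1$, and append a terminal input chosen so that constraints (\ref{equation_optimization_DNMPC_cons5})--(\ref{equation_optimization_DNMPC_cons6}) remain satisfied. Feasibility then follows from Assumption~\ref{assumption_spanning_tree} together with the defining property that $\b{y}_i^a(k|t+1) = \b{y}_i^p(k+1|t)$. Along this candidate, the self-consistency term $\|\b{y}_i^p(k|t+1) - \b{y}_i^a(k|t+1)\|_{\b{F}_i}$ vanishes identically for $k=0,\dots,N_p-1$, while the tracking and control-effort terms merely re-index, giving an upper bound of the schematic form $J_i^*(t+1) \le J_i^*(t) - \ell_i^{(0)}(t) + \ell_i^{(N_p)}(t+1) + \Delta_i^{G}(t)$, where $\ell_i^{(0)}(t)$ is the stage cost at index $k=0$, the terminal contribution $\ell_i^{(N_p)}(t+1)$ is driven to zero by (\ref{equation_optimization_DNMPC_cons5})--(\ref{equation_optimization_DNMPC_cons6}), and $\Delta_i^G(t)$ is the residual coming from the neighbor-coupling $\b{G}_i$ terms.

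The key step is handling $\Delta_i^G(t)$, which does not vanish on its own because vehicle $i$'s coupling penalizes $\b{y}_j^a(\cdot|t+1)$ (the shift of $\b{y}_j^p(\cdot|t)$) rather than the freshly optimized $\b{y}_j^p(\cdot|t+1)$. Writing the offending vector as $\b{y}_i^p(k|t) - \b{y}_j^p(k|t) + \b{\widetilde{d}}_{i,j} = \bigl(\b{y}_i^p(k|t) - \b{y}_j^a(k|t) + \b{\widetilde{d}}_{i,j}\bigr) + \bigl(\b{y}_j^a(k|t) - \b{y}_j^p(k|t)\bigr)$ and expanding the quadratic form $\|\cdot\|_{\b{G}_i}$ bounds $\Delta_i^G(t)$ by a quadratic form in $\b{y}_j^p(k|t) - \b{y}_j^a(k|t)$ weighted by $\b{G}_i$, for each $j \in \mathbb{N}_i$, plus cross terms to be reabsorbed. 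Summing over $i \in \mathcal{N}$ and interchanging the order of summation via $\{(i,j) : j \in \mathbb{N}_i\} = \{(i,j) : i \in \mathbb{O}_j\}$ bundles these contributions by the neighbor index, producing an aggregate quadratic form in $\b{y}_j^p - \b{y}_j^a$ weighted by $\sum_{i \in \mathbb{O}_j} \b{G}_i$. After reindexing $i \leftrightarrow j$, these residuals must be dominated by the $\b{F}_i$ quadratic forms whose vanishing made the shifted candidate cheap, and the hypothesis $\b{F}_i - \sum_{j \in \mathbb{O}_i} \b{G}_j \succeq \b{0}$ is precisely the condition that makes the net quadratic non-negative, giving $V(t+1) - V(t) \le -\sum_i \ell_i^{(0)}(t) \le 0$, with strict inequality away from the equilibrium.

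The main obstacle I anticipate is the bookkeeping for the quadratic cross-term: one must either apply a Young-type majorization in a way whose multiplicative constants align with the aggregated weight $\sum_{j \in \mathbb{O}_i} \b{G}_j$, or telescope the $k$-summation across the one-step horizon shift so that the mixed inner-product terms cancel rather than being bounded. Once the index interchange is executed cleanly, the sufficient condition on the weights falls out as exactly the positive-semidefiniteness needed for the summed quadratic bound. Finally, combining the strict decrease of $V$ with the fact that Assumption~\ref{assumption_spanning_tree} makes the desired trajectory the unique point at which all $\ell_i^{(0)}$ vanish closes the asymptotic-stability conclusion.
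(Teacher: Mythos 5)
The paper does not actually prove this lemma: it is imported, proof and all, from Theorem~5 of the cited reference \cite{zheng2017distributed}, so there is no internal argument to compare your attempt against. Judged on its own terms, your reconstruction follows the right strategy --- the sum-of-optimal-costs Lyapunov function with a shifted feasible candidate, which is indeed how the cited result is obtained --- and, crucially, you locate the origin of the condition correctly: the interchange of summation over $\{(i,j): j\in\mathbb{N}_i\} = \{(i,j): i\in\mathbb{O}_j\}$ is exactly what converts the per-receiver coupling penalties $\b{G}_i$ into the aggregated weight $\sum_{j\in\mathbb{O}_i}\b{G}_j$ that must be dominated by $\b{F}_i$.

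Two caveats. First, be precise about which $\b{F}$-term does the dominating: it is the penalty $\|\b{y}_j^p(\cdot|t)-\b{y}_j^a(\cdot|t)\|_{\b{F}_j}$ already paid inside $J_j^*(t)$ --- the price vehicle $j$ pays for deviating from the trajectory it announced --- that must cover the damage this deviation inflicts on every out-neighbor's $\b{G}$-coupling; it is not the $\b{F}$-term of the shifted candidate at $t+1$, which vanishes by construction and therefore dominates nothing. Your phrasing (``the $\b{F}_i$ quadratic forms whose vanishing made the shifted candidate cheap'') conflates these two roles. Second, the cross-term issue you flag at the end is a genuine gap rather than routine bookkeeping: with the squared quadratic form $\|\b{x}\|_{\b{A}}=\b{x}^\top\b{A}\,\b{x}$ defined in this paper, the aggregation step $\sum_{j\in\mathbb{O}_i}\b{z}^\top\b{G}_j\b{z}\le\b{z}^\top\b{F}_i\b{z}$ is exactly equivalent to the stated condition, so the condition is tight and leaves no slack; a Young-type majorization $\|\b{a}+\b{b}\|_{\b{G}}\le 2\|\b{a}\|_{\b{G}}+2\|\b{b}\|_{\b{G}}$ would therefore inflate the sufficient condition to $\b{F}_i\succeq 2\sum_{j\in\mathbb{O}_i}\b{G}_j$. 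Recovering the constant as stated requires the mixed inner-product terms to cancel or telescope across the horizon shift rather than be bounded. Your sketch names both options but commits to neither, and until that step is carried out the lemma's condition --- as opposed to a weaker one with a worse constant --- is not actually established.
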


\subsection{Preliminaries for Metrics and Metric Learning}

Metric learning is a tool of machine learning for obtaining a promising metric subspace for better representation of data \cite{kulis2013metric}.
In the following, we provide some definitions and lemmas regarding metrics and metric learning. 

\begin{definition}[metric distance]
A metric distance between $\b{x}_1$ and $\b{x}_2$ can be written as \cite{kulis2013metric}:
\begin{align}\label{equation_metric_distance}
\|\b{x}_1 - \b{x}_2\|_{\b{A}} &= (\b{x}_1 - \b{x}_2)^\top \b{A}\, (\b{x}_1 - \b{x}_2),
\end{align}
where $\b{A} \succeq 0$ is the weight matrix.
\end{definition}

\begin{lemma}[\!\!\cite{kulis2013metric,boyd2004convex}]\label{lemma_metric_matrix_positive_definite}
A necessary and sufficient condition for Eq. (\ref{equation_metric_distance}) to be a valid convex distance metric, satisfying the triangle inequality, is to have positive semi-definite weight matrix, i.e., $\b{A} \succeq 0$.
\end{lemma}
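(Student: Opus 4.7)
The plan is to prove both directions of the biconditional separately. For sufficiency, I would start from $\b{A} \succeq 0$ and invoke the spectral theorem to write $\b{A} = \b{U} \b{\Lambda} \b{U}^\top$ with nonnegative diagonal $\b{\Lambda}$, so that $\b{A} = \b{L}^\top \b{L}$ where $\b{L} := \b{\Lambda}^{1/2} \b{U}^\top$. Substituting into (\ref{equation_metric_distance}) gives $(\b{x}_1 - \b{x}_2)^\top \b{A}(\b{x}_1 - \b{x}_2) = \| \b{L}(\b{x}_1 - \b{x}_2)\|_2^2$, which expresses the quantity as a squared Euclidean distance in the transformed coordinates $\b{z} := \b{L}\b{x}$. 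Taking the square root as the actual metric, non-negativity, symmetry, and the triangle inequality are then inherited from the standard Euclidean metric on the transformed space. For the convexity claim, I would compute the Hessian of the map $\b{x} \mapsto \b{x}^\top \b{A}\b{x}$, which equals $2\b{A}$; since $\b{A} \succeq 0$, convexity follows immediately.

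For necessity, I would argue by contraposition. Suppose $\b{A}$ is not positive semi-definite; then there exists a unit eigenvector $\b{v}$ associated with some eigenvalue $\lambda < 0$. Setting $\b{x}_1 := \b{v}$ and $\b{x}_2 := \b{0}$ gives $(\b{x}_1 - \b{x}_2)^\top \b{A}(\b{x}_1 - \b{x}_2) = \lambda < 0$, which violates the non-negativity axiom that every distance metric must satisfy. An alternative route is to observe that the same $\b{v}$ witnesses non-convexity of $\b{x} \mapsto \b{x}^\top \b{A}\b{x}$ in the direction of $\b{v}$, since the Hessian acquires a negative eigenvalue there.

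The main subtlety, and the place I would spend the most care, is the distinction between a metric and a pseudo-metric, together with the role of the square root. Equation (\ref{equation_metric_distance}) as written is really a squared distance, and the squared Euclidean distance does not itself satisfy the triangle inequality; the clean metric axioms hold only for $\sqrt{(\b{x}_1 - \b{x}_2)^\top \b{A}(\b{x}_1 - \b{x}_2)}$. Moreover, when $\b{A}$ is only PSD (and not PD), the kernel of $\b{L}$ is nontrivial, so the identity of indiscernibles fails and one obtains a pseudo-metric rather than a strict metric. I would explicitly note this reading of the statement, so that PSD delivers a valid (pseudo-)metric via the Euclidean reduction, while PD is needed only if one insists on $d(\b{x}_1,\b{x}_2)=0 \Rightarrow \b{x}_1 = \b{x}_2$. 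With this convention fixed, the two short arguments above close both directions.
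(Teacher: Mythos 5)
Your argument is correct, and the paper itself offers no proof of this lemma---it is cited directly from \cite{kulis2013metric,boyd2004convex}---so there is nothing to diverge from; your factorization $\b{A}=\b{L}^\top\b{L}$ reducing the form to a Euclidean distance in transformed coordinates is exactly the decomposition the paper uses in its proof of the adjacent Lemma~\ref{lemma_metric_subspace_projection}, and your Hessian and negative-eigenvalue arguments for convexity and necessity are the standard ones. Your observation that Eq.~(\ref{equation_metric_distance}) is really a \emph{squared} distance (so the triangle inequality holds only for its square root) and that PSD without strict positive definiteness yields only a pseudo-metric is a genuine subtlety the paper's statement glosses over, and making that reading explicit strengthens rather than weakens the proof.
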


% Metric learning is the task of learning the weight matrix $\b{A}$ to have a suitable metric for the goal \cite{kulis2013metric}, which is the platoon control here.

\begin{lemma}\label{lemma_metric_subspace_projection}
A metric can be seen as projection onto a (lower dimensional) subspace of its factorized weight matrix.
\end{lemma}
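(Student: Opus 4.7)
The plan is to exploit the positive semi-definiteness of $\b{A}$, which is guaranteed by Lemma \ref{lemma_metric_matrix_positive_definite}. First I would invoke the spectral theorem (or equivalently a Cholesky-style factorization) to write $\b{A} = \b{U}^\top \b{U}$ for some matrix $\b{U} \in \mathbb{R}^{r \times d}$, where $d$ is the ambient dimension of the data and $r := \mathrm{rank}(\b{A})$. Concretely, using the eigendecomposition $\b{A} = \b{V}\b{\Lambda}\b{V}^\top$ with $\b{\Lambda} \succeq 0$, one can take $\b{U} := \b{\Lambda}^{1/2}\b{V}^\top$, retaining only the rows corresponding to strictly positive eigenvalues so that $\b{U}$ has exactly $r$ rows.

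Next, I would substitute this factorization into the metric of Eq.~(\ref{equation_metric_distance}) to obtain
\begin{align*}
\|\b{x}_1 - \b{x}_2\|_{\b{A}} &= (\b{x}_1 - \b{x}_2)^\top \b{U}^\top \b{U}\, (\b{x}_1 - \b{x}_2) \\
&= \|\b{U}\b{x}_1 - \b{U}\b{x}_2\|_2^2,
\end{align*}
which exposes the Mahalanobis-type metric as an ordinary squared Euclidean distance between the transformed points $\b{U}\b{x}_1$ and $\b{U}\b{x}_2$.

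Finally, I would interpret the linear map $\b{x} \mapsto \b{U}\b{x}$ geometrically: since $\b{U} \in \mathbb{R}^{r \times d}$ has exactly $r \leq d$ rows, its image lies in an $r$-dimensional subspace, namely $\mathrm{range}(\b{U}) = \mathrm{range}(\b{A})$. Hence the metric computed with weight matrix $\b{A}$ coincides with the standard Euclidean metric after projecting the inputs into this $r$-dimensional subspace, establishing the claim. Strict inequality $r < d$ occurs precisely when $\b{A}$ is rank-deficient, giving the announced reduction of dimensionality.

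I do not anticipate a serious obstacle here; the only subtlety is ensuring the factorization is chosen so that the number of rows of $\b{U}$ equals $\mathrm{rank}(\b{A})$ rather than $d$, which is what produces the \emph{lower-dimensional} subspace interpretation rather than a trivial equality in the original ambient space. This is handled by dropping zero eigenvalues from the eigendecomposition of $\b{A}$.
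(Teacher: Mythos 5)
Your proposal is correct and follows essentially the same route as the paper: eigendecompose $\b{A}$, form a square-root factor, and rewrite the metric as a squared Euclidean distance between linearly transformed points. The only difference is that you explicitly drop the zero eigenvalues so the factor has $r=\mathrm{rank}(\b{A})$ rows, which makes the ``lower dimensional'' part of the claim precise, whereas the paper keeps the full $\b{B}=\b{\Psi}\,\b{\Xi}^{1/2}$ and refers loosely to the column space of $\b{B}$.
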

\begin{proof}
The metric can be stated as:
\begin{align}
\|\b{x}_1 - &\b{x}_2\|_{\b{A}} \overset{(\ref{equation_metric_distance})}{=} (\b{x}_1 - \b{x}_2)^\top \b{A}\, (\b{x}_1 - \b{x}_2) \nonumber \\
&\overset{(a)}{=} (\b{B}^\top \b{x}_1 -\! \b{B}^\top \b{x}_2)^\top (\b{B}^\top \b{x}_1 -\! \b{B}^\top \b{x}_2), \label{equation_metric_projection}
\end{align}
where $(a)$ is because $\b{A} \succeq 0$ and $\b{A} = \b{A}^\top$ so its factorization (eigenvalue decomposition) can be written as: $\b{A} = \b{\Psi}\, \b{\Xi}\, \b{\Psi}^\top = \b{\Psi}\, \b{\Xi}^{(1/2)}\, \b{\Xi}^{(1/2)}\, \b{\Psi}^\top = \b{B}\b{B}^\top$, by taking $\b{B} := \b{\Psi}\, \b{\Xi}^{(1/2)}$. 
The Eq. (\ref{equation_metric_projection}) is the Euclidean distance metric in the column space of $\b{B}$. 
\end{proof}

\begin{definition}[$\varepsilon$-positive definite]\label{definition_epsilon_positive_definite}
A symmetric matrix $\b{A} \in \mathbb{R}^{n \times n}$ is $\varepsilon$-positive definite if and only if:
\begin{align}\label{equation_epsilon_positive_definite}
\b{x}^\top \b{A}\, \b{x} > \varepsilon, \quad \forall \b{x} \in \mathbb{R}^{n} \setminus \{\b{0}\},
\end{align}
for $\varepsilon\!\! >\!\! 0$.
We denote the $\varepsilon$-positive definite cone and belonging to this cone by $\mathbb{S}_\varepsilon^n$ and $\b{A} \succ_{\varepsilon} \b{0}$, respectively. 
\end{definition}

% \begin{corollary}
% For $\b{A} \succ_{\varepsilon} \b{0}$, we have:
% \begin{align}\label{equation_epsilon_positive_definite_delta}
% \b{A} \succ \delta \b{I}, \text{ where } \varepsilon =: \b{x}^\top \delta\, \b{x}, \forall \b{x} \in \mathbb{R}^{n} \setminus \{\b{0}\},
% \end{align}
% where $\b{I}$ is the identity matrix. 
% \end{corollary}
% \begin{proof}
% According to Definition \ref{definition_epsilon_positive_definite}, we have:
% \begin{align*}
% &\b{A} \succ_{\varepsilon} \b{0} \implies \b{x}^\top \b{A}\, \b{x} > \varepsilon \overset{(\ref{equation_epsilon_positive_definite_delta})}{\implies} \b{x}^\top \b{A}\, \b{x} > \b{x}^\top \delta\, \b{x} \\
% &\therefore~~ \b{x}^\top (\b{A} - \delta \b{I})\, \b{x} > \b{0} \overset{(a)}{\implies} \b{A} - \delta \b{I} \succ \b{0},
% \end{align*}
% where $(a)$ is according to the definition of positive definite matrix. 
% \end{proof}

\subsection{Preliminaries for Optimization}

In the following, we provide some preliminary theories for optimization and projection onto the convex sets. 

\begin{lemma}[\!\!\cite{parikh2014proximal}]
Consider the proximal operator, defined as:
\begin{align}
\textbf{prox}_f(\b{A}) := \arg\min_{\b{B}} \big(f(\b{B}) - \frac{1}{2} \|\b{B} - \b{A}\|_F^2\big),
\end{align}
where $\|.\|_F$ denotes the Frobenius norm. If the function $f$ is an indicator function, $\mathcal{I}(\mathcal{S})$, which is equal to zero and infinity when its input belongs and does not belong to the set $\mathcal{S}$, respectively, the operator is reduced to projection on the set, denoted by:
\begin{align}\label{equation_projection}
\Pi_{\mathcal{S}}(\b{A}) := \arg\min_{\b{B}\in \mathcal{S}}\|\b{B} - \b{A}\|_F^2.
\end{align}
\end{lemma}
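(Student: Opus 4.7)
The plan is to proceed by direct substitution of the indicator function into the definition of the proximal operator, and then separate the analysis by cases on whether the candidate minimizer $\b{B}$ lies in $\mathcal{S}$ or not. First I would write out $\textbf{prox}_f(\b{A})$ with $f = \mathcal{I}(\mathcal{S})$ explicitly and observe that the objective $\mathcal{I}(\mathcal{S})(\b{B}) + \tfrac{1}{2}\|\b{B}-\b{A}\|_F^2$ (interpreting the definition in the usual proximal-operator sense) is identically $+\infty$ whenever $\b{B} \notin \mathcal{S}$, and equals $\tfrac{1}{2}\|\b{B}-\b{A}\|_F^2$ whenever $\b{B} \in \mathcal{S}$.

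Next I would argue that any minimizer of the overall objective over $\b{B}\in\mathbb{R}^{n\times n}$ must therefore lie in $\mathcal{S}$, since any point outside $\mathcal{S}$ attains the value $+\infty$ and is strictly worse than, say, the feasible point $\b{B}=\b{A}$ (when $\b{A}\in\mathcal{S}$) or any other point of $\mathcal{S}$ (when $\b{A}\notin\mathcal{S}$, in which case $\mathcal{S}$ must still be nonempty for the projection to be defined). Restricting the argmin to $\mathcal{S}$, the indicator term vanishes and the problem collapses to $\arg\min_{\b{B}\in\mathcal{S}} \tfrac{1}{2}\|\b{B}-\b{A}\|_F^2$, whose argmin coincides with that of $\|\b{B}-\b{A}\|_F^2$ since multiplying by the positive constant $\tfrac{1}{2}$ does not affect the minimizer. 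This yields exactly the projection $\Pi_{\mathcal{S}}(\b{A})$ as defined in Eq.~(\ref{equation_projection}).

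I do not anticipate any real obstacle, since the result is essentially a definitional unwinding rather than a substantive theorem; the only subtlety worth flagging in the proof is the convention for arithmetic with $+\infty$ (so that ``$\infty + \text{finite} = \infty$'' is used to discard infeasible points), together with the implicit assumption that $\mathcal{S}$ is nonempty and closed so that the projection is well-defined and the argmin is attained. If one wishes to be fully rigorous, uniqueness of the projection can additionally be obtained under the assumption that $\mathcal{S}$ is convex, via strict convexity of the squared Frobenius norm, but for the purposes of this lemma it suffices to identify the two argmin sets.
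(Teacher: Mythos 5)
Your argument is correct and is the standard definitional unwinding; the paper itself offers no proof of this lemma, citing it directly from Parikh and Boyd, so there is no alternative route to compare against. You were also right to read the definition with a $+$ rather than the paper's (typographical) $-$ in front of $\frac{1}{2}\|\b{B} - \b{A}\|_F^2$: with the minus sign as printed, the objective would be concave in $\b{B}$ and unbounded below, and the reduction to the projection $\Pi_{\mathcal{S}}(\b{A})$ would not hold, so flagging that convention (along with nonemptiness and closedness of $\mathcal{S}$ for attainment) is exactly the right level of care.
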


% Projection onto cones is useful in optimization \cite{henrion2012projection}. 
The following theorem explains how to project a symmetric matrix onto the $\varepsilon$-positive definite cone.
\begin{theorem}\label{theorem_projection_epsilon_positive_definite}
Projection of a matrix $\b{A} \in \mathbb{R}^{n \times n}$ onto the $\varepsilon$-positive definite cone is:
\begin{align}
\Pi_{\mathbb{S}_\varepsilon^n}(\b{A}) := \b{V}\, \textbf{diag}(\max(\b{\Upsilon}, \varepsilon))\, \b{V}^\top,
\end{align}
where $\b{A} = \b{V} \b{\Upsilon} \b{V}^\top$ is the eigenvalue decomposition of $\b{A}$ and $\textbf{diag}(.)$ is the diagonal matrix with its argument as the diagonal. 
\end{theorem}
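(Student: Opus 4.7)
My plan is to reduce this matrix projection to $n$ independent scalar projections by exploiting the unitary invariance of the Frobenius norm. Implicitly the statement assumes $\b{A}$ is symmetric, since only then does it admit a real orthogonal eigendecomposition $\b{A} = \b{V}\b{\Upsilon}\b{V}^\top$; I would also interpret $\mathbb{S}_\varepsilon^n$ as the closed cone $\{\b{B} : \b{B} \succeq \varepsilon \b{I}\}$ so that the projection (\ref{equation_projection}) is well-defined, with a unique minimizer guaranteed by strict convexity of $\|\cdot-\b{A}\|_F^2$ over a nonempty closed convex set.

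First I would change variables via $\b{C} := \b{V}^\top \b{B}\, \b{V}$. Because $\b{V}$ is orthogonal, the constraint is preserved ($\b{C}\succeq\varepsilon\b{I}$ iff $\b{B}\succeq\varepsilon\b{I}$), and unitary invariance of the Frobenius norm gives $\|\b{B}-\b{A}\|_F^2 = \|\b{V}(\b{C}-\b{\Upsilon})\b{V}^\top\|_F^2 = \|\b{C}-\b{\Upsilon}\|_F^2$. It therefore suffices to solve $\b{C}^\star = \arg\min_{\b{C}\succeq\varepsilon\b{I}}\|\b{C}-\b{\Upsilon}\|_F^2$ and then set $\b{B}^\star = \b{V}\b{C}^\star\b{V}^\top$. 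Next I would split the Frobenius norm into diagonal and off-diagonal contributions, using that $\b{\Upsilon}$ is diagonal, to obtain $\|\b{C}-\b{\Upsilon}\|_F^2 = \sum_i (C_{ii}-\Upsilon_{ii})^2 + \sum_{i\neq j} C_{ij}^2 \geq \sum_i (C_{ii}-\Upsilon_{ii})^2$. The constraint $\b{C}\succeq\varepsilon\b{I}$ forces $C_{ii} = \b{e}_i^\top\b{C}\,\b{e}_i \geq \varepsilon$ for each $i$, so each scalar subproblem $\min_{C_{ii}\geq\varepsilon}(C_{ii}-\Upsilon_{ii})^2$ is minimized at $C_{ii}^\star = \max(\Upsilon_{ii},\varepsilon)$. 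The candidate $\b{C}^\star := \textbf{diag}(\max(\b{\Upsilon},\varepsilon))$ is feasible (its eigenvalues are exactly $\max(\Upsilon_{ii},\varepsilon)\geq\varepsilon$) and meets the lower bound, hence is the unique minimizer. Transforming back yields $\b{B}^\star = \b{V}\,\textbf{diag}(\max(\b{\Upsilon},\varepsilon))\,\b{V}^\top$, as claimed.

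The main subtle point will be the combined argument that the optimum can be taken diagonal in the eigenbasis of $\b{A}$ and that its diagonal entries reduce to scalar max's. A direct KKT treatment of the conic constraint $\b{C}\succeq\varepsilon\b{I}$ would be bulkier; the two-step plan above sidesteps dual variables by producing a closed-form lower bound via the nonnegativity of the off-diagonal squared terms and exhibiting a feasible diagonal matrix that attains it. As a sanity check, setting $\varepsilon=0$ recovers the familiar projection onto the PSD cone, $\b{V}\,\textbf{diag}(\max(\b{\Upsilon},0))\,\b{V}^\top$, which confirms the formula in the limiting case.
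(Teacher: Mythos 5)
Your proof is correct, and while it shares the paper's opening move (rotating into the eigenbasis of $\b{A}$ via unitary invariance of the Frobenius norm), the decisive step is genuinely different — and tighter. The paper expands $\|\b{X}-\b{A}\|_F^2$ in traces, sets the \emph{unconstrained} gradient to zero (which only recovers $\b{V}^\top\b{X}\b{V}=\b{\Upsilon}$, i.e.\ $\b{X}=\b{A}$), and then asserts that imposing the constraint amounts to clipping the eigenvalues at $\varepsilon$; the passage from the unconstrained stationary point to the constrained minimizer is not actually justified there. You instead drop the off-diagonal squared terms to get the closed-form lower bound $\sum_i(C_{ii}-\Upsilon_{ii})^2$, observe that $\b{C}\succeq\varepsilon\b{I}$ forces $C_{ii}\geq\varepsilon$ so each scalar term is at least $(\max(\Upsilon_{ii},\varepsilon)-\Upsilon_{ii})^2$, and exhibit a feasible diagonal matrix attaining the bound — which, together with uniqueness from strict convexity over a closed convex set, is a complete argument with no dual variables or hand-waving. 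Your explicit reading of $\mathbb{S}_\varepsilon^n$ as the closed set $\{\b{B}:\b{B}\succeq\varepsilon\b{I}\}$ is also the right repair of Definition \ref{definition_epsilon_positive_definite}, whose literal condition $\b{x}^\top\b{A}\,\b{x}>\varepsilon$ for all nonzero $\b{x}$ fails for small $\b{x}$ and must be meant per unit norm; the paper glosses over this. In short: same reduction to the eigenbasis, but your lower-bound-and-attainment argument buys rigor that the paper's stationarity-then-clip sketch lacks, at essentially no extra length.
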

\begin{proof}
According to Eq. (\ref{equation_projection}), we have: $\Pi_{\mathbb{S}_\varepsilon^n}(\b{A}) = \arg\min_{\b{X} \in \mathbb{S}_\varepsilon^n}\|\b{X}-\b{A}\|^2_F$. 
We have $\|\b{X}-\b{A}\|^2_F = \|\b{X} - \b{V} \b{\Upsilon} \b{V}^\top\|^2_F \overset{(a)}{=} \|\b{V} (\b{V}^\top \b{X} \b{V} - \b{\Upsilon}) \b{V}^\top\|^2_F \overset{(b)}{=} \textbf{tr}\big((\b{V}^\top \b{V})^2 (\b{V}^\top \b{X} \b{V} - \b{\Upsilon})^2 \big) \overset{(c)}{=} \textbf{tr}(\b{\Upsilon}^2) + \textbf{tr}((\b{V}^\top \b{X} \b{V})^2) -2\, \textbf{tr}(\b{\Upsilon} \b{V}^\top \b{X} \b{V})$ where $\textbf{tr}(.)$ denotes the trace of matrix, $(a)$ and $(c)$ are because $\b{V}$ is an orthogonal matrix and $(b)$ is because the term in the norm is symmetric. 
% This approach also exists in multi-dimensional scaling in manifold learning \cite{cox2000multidimensional}. 
For the sake of minimization, we have: $\partial \|\b{X}-\b{A}\|^2_F / \partial \b{X} = 2\b{V} (\b{V}^\top \b{X} \b{V}) \b{V}^\top - 2 \b{V} \b{\Upsilon} \b{V}^\top \overset{\text{set}}{=} \b{0} \implies \b{V} (\b{V}^\top \b{X} \b{V}) \b{V}^\top = \b{V} \b{\Upsilon} \b{V}^\top \implies \b{\Upsilon} = \b{V}^\top \b{X} \b{V}$.
Considering the columns of $\b{V}$ (which are orthonormal so are not zero vectors) as $\b{x}$ in Eq. (\ref{equation_epsilon_positive_definite}) gives $\b{\Upsilon} = \b{V}^\top \b{X} \b{V} > \varepsilon \b{I}$.
As the eigenvalues of the diagonal matrix $\b{\Upsilon}$ are its diagonal entries, projection of $\b{A}$ onto the set $\mathbb{S}_\varepsilon^n$ clips the eigenvalues, i.e. diagonal entries of $\b{\Upsilon}$, to $\varepsilon$. 
\end{proof}

In the following, we provide corollaries about the convexity and characteristics of the $\varepsilon$-positive definite cone. 

\begin{corollary}\label{corollary_espilon_positive_definite_subset}
According to the Definition \ref{definition_epsilon_positive_definite} and Theorem \ref{theorem_projection_epsilon_positive_definite}, the eigenvalues of an $\varepsilon$-positive definite matrix are at least equal to $\varepsilon$. Hence, as $\varepsilon \neq 0$, the $\varepsilon$-positive definite cone is an inclusive subset of the positive semi-definite cone, i.e., $\mathbb{S}_\varepsilon^n \subset \mathbb{S}_+^n$. 
\end{corollary}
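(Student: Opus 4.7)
The plan is to unpack the two assertions of the corollary in sequence, leaning on the spectral reasoning already developed in the proof of Theorem \ref{theorem_projection_epsilon_positive_definite}. First I would verify that the eigenvalues of any $\b{A} \succ_\varepsilon \b{0}$ are bounded below by $\varepsilon$. Since $\b{A}$ is symmetric, it admits an eigendecomposition $\b{A} = \b{V}\b{\Upsilon}\b{V}^\top$ with orthonormal columns $\b{v}_i$ in $\b{V}$ and eigenvalues $\lambda_i$ on the diagonal of $\b{\Upsilon}$. Substituting each unit eigenvector into the defining inequality of Definition \ref{definition_epsilon_positive_definite} gives $\b{v}_i^\top \b{A}\, \b{v}_i = \lambda_i > \varepsilon$, which is precisely the argument already used inside the proof of Theorem \ref{theorem_projection_epsilon_positive_definite} when concluding $\b{\Upsilon} > \varepsilon\, \b{I}$.

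Next I would derive the containment $\mathbb{S}_\varepsilon^n \subseteq \mathbb{S}_+^n$. Given $\varepsilon > 0$, the previous step forces every eigenvalue to satisfy $\lambda_i > \varepsilon > 0$, so $\b{A}$ is strictly positive definite and in particular positive semi-definite. This yields the inclusion immediately; the hypothesis $\varepsilon \neq 0$ invoked in the corollary statement is precisely what prevents the trivial case that would collapse the two cones.

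Finally, to justify that the inclusion is proper (as signalled by the use of $\subset$ and the word \emph{inclusive}), I would exhibit a concrete witness in $\mathbb{S}_+^n \setminus \mathbb{S}_\varepsilon^n$. The zero matrix is the simplest choice: it satisfies $\b{x}^\top \b{0}\, \b{x} = 0 \not> \varepsilon$ for every nonzero $\b{x}$, so it lies in $\mathbb{S}_+^n$ but violates Definition \ref{definition_epsilon_positive_definite}; more generally, any positive semi-definite matrix whose smallest eigenvalue does not exceed $\varepsilon$ suffices. Combining the three steps yields $\mathbb{S}_\varepsilon^n \subsetneq \mathbb{S}_+^n$.

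I do not anticipate a serious obstacle: the corollary is essentially a direct reading of the definition through the eigenbasis. The only point requiring mild care is that the inequality in Definition \ref{definition_epsilon_positive_definite} is not scale-invariant in $\b{x}$, so the argument must be applied to \emph{unit-norm} eigenvectors in order to read off the eigenvalue bound cleanly, exactly as in the proof of Theorem \ref{theorem_projection_epsilon_positive_definite}.
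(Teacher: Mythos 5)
Your proposal is correct and follows essentially the same route as the paper, which justifies the corollary inline by reading the eigenvalue bound $\lambda_i > \varepsilon > 0$ off Definition~\ref{definition_epsilon_positive_definite} via the unit eigenvectors, exactly as in the proof of Theorem~\ref{theorem_projection_epsilon_positive_definite}, and then concluding containment in $\mathbb{S}_+^n$. Your additional witness (the zero matrix) for the strictness of the inclusion, and your remark that the non-scale-invariant inequality must be applied to unit-norm eigenvectors, are both sensible refinements of what the paper leaves implicit.
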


\begin{corollary}\label{corollary_metric_matrix_epsilon_positive_definite}
According to Lemma \ref{lemma_metric_matrix_positive_definite} and Corollary \ref{corollary_espilon_positive_definite_subset}, if the weight matrix belongs to the $\varepsilon$-positive definite cone, the metric is valid. 
\end{corollary}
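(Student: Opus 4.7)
The plan is to chain together the two cited results directly, since this corollary is essentially a one-line consequence. First, I would invoke Corollary \ref{corollary_espilon_positive_definite_subset}, which establishes the set inclusion $\mathbb{S}_\varepsilon^n \subset \mathbb{S}_+^n$. So if the weight matrix $\b{A}$ is assumed to be $\varepsilon$-positive definite, i.e., $\b{A} \succ_\varepsilon \b{0}$, then in particular $\b{A} \succeq \b{0}$ as a member of the positive semi-definite cone.

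Next, I would apply Lemma \ref{lemma_metric_matrix_positive_definite}, which gives a necessary and sufficient condition for the expression in Eq. (\ref{equation_metric_distance}) to define a valid convex distance metric obeying the triangle inequality, namely that $\b{A} \succeq \b{0}$. Since this condition has already been verified in the previous step, the metric defined by $\b{A}$ is valid. Concatenating these two steps yields the claim.

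There is essentially no significant obstacle here because both ingredients are already in hand. The only subtlety worth flagging is that $\varepsilon$-positive definiteness is a strictly stronger condition than positive semi-definiteness (since $\varepsilon > 0$ in Definition \ref{definition_epsilon_positive_definite}), so the implication in this corollary only flows in one direction: $\b{A} \succ_\varepsilon \b{0}$ is sufficient but not necessary for a valid metric. This is consistent with the intended downstream use, where $\varepsilon$-positive definiteness will presumably be enforced as a strict constraint in the distributed metric-learning / ADMM formulation (so as to rule out degenerate rank-deficient metrics), and the corollary simply reassures us that imposing this stronger constraint does not cost us metric validity.
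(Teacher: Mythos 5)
Your proposal is correct and matches the paper's (implicit) argument exactly: the paper justifies this corollary precisely by chaining the inclusion $\mathbb{S}_\varepsilon^n \subset \mathbb{S}_+^n$ from Corollary \ref{corollary_espilon_positive_definite_subset} with the necessary-and-sufficient condition $\b{A} \succeq \b{0}$ of Lemma \ref{lemma_metric_matrix_positive_definite}. Your added remark that $\varepsilon$-positive definiteness is sufficient but not necessary is accurate and consistent with how the constraint is used later in Problem (\ref{equation_optimization_ML}).
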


\begin{lemma}[\!\!\cite{boyd2004convex}]\label{lemma_convex_objective_function}
The objective function in Problem (\ref{equation_optimization_DNMPC}), which is a non-negative combination of some quadratic convex functions (according to Eq. (\ref{equation_objective})), is a convex function. 
\end{lemma}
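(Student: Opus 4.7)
The plan is to verify convexity by decomposing $J_i$ into elementary pieces and appealing to standard convexity-preserving operations. I would treat $J_i$ as a function of its stated arguments $\b{y}_i^p$, $u_i^p$, $\b{y}_i^a$, $\b{y}_{-i}^a$ (evaluated over the prediction horizon), and establish convexity in these arguments.

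First I would fix the atomic building block: for any symmetric matrix $\b{M} \succeq 0$, the quadratic form $\b{v} \mapsto \b{v}^\top \b{M}\, \b{v}$ is convex on $\mathbb{R}^n$. The cleanest way is to note that its Hessian equals $2\b{M} \succeq \b{0}$, which is the textbook second-order condition for convexity. Alternatively, I could reuse Lemma \ref{lemma_metric_subspace_projection}: since $\b{M} \succeq 0$ factors as $\b{M} = \b{B}\b{B}^\top$, we have $\|\b{v}\|_{\b{M}} = \|\b{B}^\top \b{v}\|_2^2$, which is the composition of the convex squared Euclidean norm with a linear map and is therefore convex.

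Next I would observe that each summand in Eq.~(\ref{equation_objective}) is an affine preimage of such a quadratic form. Specifically, $\b{y}_i^p(k|t) - \b{y}_{\text{des},i}(k|t)$, $\b{y}_i^p(k|t) - \b{y}_i^a(k|t)$, $\b{y}_i^p(k|t) - \b{y}_j^a(k|t) + \b{\widetilde{d}}_{i,j}$, and $u_i^p(k|t) - h_i(v_i^p)$ are each affine in the corresponding arguments (with $\b{y}_{\text{des},i}$, $\b{y}_i^a$, $\b{y}_j^a$, and the constants $\b{\widetilde{d}}_{i,j}$ treated appropriately). Since composition of a convex function with an affine map preserves convexity, each term of the form $\|\cdot\|_{\b{Q}_i}$, $\|\cdot\|_{\b{F}_i}$, $\|\cdot\|_{\b{G}_i}$ is convex by the assumption $\b{Q}_i, \b{F}_i, \b{G}_i \succeq \b{0}$, and the scalar term $\|u_i^p(k|t) - h_i(v_i^p)\|_{R_i} = R_i\,(u_i^p(k|t) - h_i(v_i^p))^2$ is convex because $R_i \geq 0$.

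Finally I would close the argument by invoking the fact that a nonnegative linear combination of convex functions is convex. The double sum in (\ref{equation_objective}) is a finite sum with unit (hence nonnegative) coefficients of the convex pieces just verified, so $J_i$ itself is convex. I do not anticipate a real obstacle here: the result is essentially a bookkeeping exercise matching each term of (\ref{equation_objective}) to a convexity-preserving rule, and the only item that warrants explicit mention is the positive semi-definiteness of $\b{Q}_i$, $\b{F}_i$, $\b{G}_i$ and nonnegativity of $R_i$, all of which are assumed directly below Eq.~(\ref{equation_objective}).
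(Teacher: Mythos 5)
The paper provides no standalone proof of this lemma: it simply cites Boyd and Vandenberghe and embeds the entire argument in the statement itself, namely that $J_i$ is a non-negative combination of convex quadratic functions. Your write-up — PSD quadratic forms are convex by the Hessian (or factorization) criterion, each summand is such a form precomposed with an affine map, and a finite nonnegative sum of convex functions is convex — is precisely that argument spelled out, so it is correct and takes essentially the same approach.
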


\section{Dynamic Platoon Control: Handling Cut-in/Cut-out Maneuvers}\label{section_dynamic_platoon}

In this section, we propose the extension of the DNMPC \cite{zheng2017distributed} for handling the dynamic cut-in/cut-out maneuvers. Assume there exist $N_{ci}$ cut-in and $N_{co}$ cut-out maneuvers in total while the number of initial FVs in the platoon is $N$. Let $\mathcal{N}_{ci} := \{1, \dots, N_{ci}\}$ and $\mathcal{N}_{co} := \{1, \dots, N_{co}\}$. We denote the time of the $i$-th cut-in and the $j$-th cut-out maneuvers by $t_{ci,i}$ and $t_{co,j}$, respectively. The following theorem determines the time of convergence of a dynamic platoon including possible cut-in and cut-out maneuvers. 

\begin{theorem}\label{theorem_DNMPC_convergence_in_N_steps_dynamic}
When having cut-in and cut-out maneuvers, if Assumption \ref{assumption_spanning_tree} is satisfied, the Problem (\ref{equation_optimization_DNMPC}) guarantees convergence of the output to the desired output in at most
\begin{equation}\label{equation_DNMPC_convergence_in_N_steps_dynamic_time}
\begin{aligned}
t_\text{conv} :=  &\,\underset{i,j}{\max}\big[t_{ci,i}, t_{co,j} \,|\, \forall i \in \mathcal{N}_{ci}, \forall j \in \mathcal{N}_{cj}\big] \\
&+ N + N_{ci} - N_{co},
\end{aligned}
\end{equation}
time steps, i.e., $\b{y}_i^p(N_p | t) = \b{y}_{\text{des},i}(N_p | t), \forall t \geq t_\text{conv}$.
\end{theorem}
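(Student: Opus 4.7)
My plan is to reduce the dynamic case to the static result of Lemma \ref{lemma_DNMPC_convergence_in_N_steps} by concentrating on the behavior of the platoon after the last maneuver has taken place. Writing $t_{\max} := \max_{i,j}[t_{ci,i}, t_{co,j}]$, I would observe that for $t \geq t_{\max}$ no more cut-in or cut-out events occur, so the membership and communication topology are frozen. Every cut-in increases the follower count by one and every cut-out decreases it by one, so the platoon settles into a static configuration with exactly $N + N_{ci} - N_{co}$ vehicles. The strategy is then to invoke Lemma \ref{lemma_DNMPC_convergence_in_N_steps} on this post-maneuver static platoon, which yields convergence in at most $N + N_{ci} - N_{co}$ additional steps, giving the claimed $t_\text{conv} = t_{\max} + N + N_{ci} - N_{co}$.

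First, I would need to verify that the hypotheses of Lemma \ref{lemma_DNMPC_convergence_in_N_steps} are legitimately inherited at time $t_{\max}$. This amounts to checking two things: (i) Assumption \ref{assumption_spanning_tree} continues to hold after each cut-in and cut-out, which I would state as a standing requirement on admissible maneuvers (any new vehicle is pinned to or has a directed path from the LV, and no cut-out disconnects the induced subgraph); and (ii) the DNMPC optimization (\ref{equation_optimization_DNMPC}) remains recursively feasible across each maneuver. For (ii), I would construct a concrete feasible warm-start: for a cut-in at time $t_{ci,i}$, initialize the new vehicle's assumed trajectory $\b{y}^a$ using its measured state propagated under the equilibrium torque $h_i(v_0)$; for a cut-out, simply delete the corresponding terms from the neighbor sums in (\ref{equation_objective}) and (\ref{equation_optimization_DNMPC_cons5}). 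Feasibility of the predicted trajectory for the remaining vehicles is preserved because the terminal constraints only couple through averages over $\mathbb{I}_i$, which remain well-defined.

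Second, for the interval $[0,\,t_{\max}]$ I only need boundedness rather than convergence. I would appeal to Lemma \ref{lemma_mainProblem_Lyapunov}: between consecutive maneuvers the topology is fixed, so the local cost functions act as non-increasing Lyapunov functions and the state trajectories remain bounded. At each maneuver epoch the Lyapunov value may jump (because the underlying graph, and hence the function itself, changes), but since there are only finitely many such epochs ($N_{ci} + N_{co}$ of them) and each warm-start above yields a bounded cost, the state at $t_{\max}$ lies in a bounded set that can serve as the initial condition for the post-maneuver static phase.

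Finally, from $t_{\max}$ onward Lemma \ref{lemma_DNMPC_convergence_in_N_steps} applies verbatim to the $(N + N_{ci} - N_{co})$-vehicle static platoon, yielding $\b{y}_i^p(N_p|t) = \b{y}_{\text{des},i}(N_p|t)$ for every $t \geq t_{\max} + N + N_{ci} - N_{co} = t_\text{conv}$, which is the claim. The main obstacle I anticipate is not the counting argument itself but cleanly justifying recursive feasibility across cut-in events, where a previously absent vehicle suddenly participates in its neighbors' terminal constraints (\ref{equation_optimization_DNMPC_cons5}); the argument hinges on exhibiting an explicit admissible assumed trajectory for the new vehicle so that the neighbors' optimization problems remain feasible at the switching instant.
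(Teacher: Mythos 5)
Your proposal is correct and follows essentially the same route as the paper: both arguments anchor the analysis at the time of the last maneuver, count the resulting platoon size as $N + N_{ci} - N_{co}$, and apply the static convergence result (Lemma~\ref{lemma_DNMPC_convergence_in_N_steps}, which the paper unpacks via the nilpotency of $(\b{D}+\b{P})^{-1}\b{A}$ for the enlarged or shrunk graph) to obtain that many additional steps. Your extra care about preserving Assumption~\ref{assumption_spanning_tree} across maneuvers and about recursive feasibility at the switching instants addresses points the paper's proof leaves implicit, but it does not change the underlying argument.
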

\begin{proof}
Let $\b{P} := \textbf{diag}(p_1, \dots, p_N)$, and $\b{D}$, $\b{A}$, and $\b{L} := \b{D} - \b{A}$ be the degree matrix, adjacency matrix, and Laplacian matrix of the platoon graph, respectively. 
When a new cut-in or cut-out occurs, some new chaos is introduced to the system so we can consider the latest cut-in/cut-out maneuver. Considering the latest cut-in, one vehicle is added to the number of existing vehicles denoted by $N$. If the platoon graph is unidirectional and satisfies Assumption \ref{assumption_spanning_tree}, the new $\b{A} \in \mathbb{R}^{(N + 1) \times (N + 1)}$ is a lower-triangular matrix. Moreover, according to {\cite[Lemma 4]{zheng2017distributed}}, we have $\b{D} + \b{P} > 0$, yielding the eigenvalues of $(\b{D} + \b{P})^{-1} \b{A}$ to be zero and this matrix to be nilpotent with degree at most $N + 1$. Based on {\cite[Lemma 1]{zheng2017distributed}} and {\cite[Theorem 1]{zheng2017distributed}}, $\b{y}_i^p(N_p | t)$ converges to the desired output in at most $N + 1$ steps. 
Extending this to $N_{ci}$ cut-in maneuvers requires $N + N_{ci}$ time steps after the latest cut-in. Similar analysis can be performed for the cut-out maneuvers, resulting in $N - N_{co}$ time steps after the latest cut-out because the number of vehicles has been reduced. In general, having $N_{ci}$ cut-in and $N_{co}$ cut-out maneuvers will need $N + N_{ci} - N_{co}$ time steps after the latest maneuver which is formulated as $\max_{i,j}[t_{ci,i}, t_{co,j} \,|\, \forall i \in \mathcal{N}_{ci}, \forall j \in \mathcal{N}_{cj}]$.
\end{proof}

\begin{corollary}
Lemma \ref{lemma_DNMPC_convergence_in_N_steps}, for the static platoon, is a special case of Theorem \ref{theorem_DNMPC_convergence_in_N_steps_dynamic} which is for a dynamic platoon. 
\end{corollary}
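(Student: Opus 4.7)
The plan is to verify that the conclusion of Lemma \ref{lemma_DNMPC_convergence_in_N_steps} is recovered by specializing Theorem \ref{theorem_DNMPC_convergence_in_N_steps_dynamic} to the degenerate case in which no cut-in or cut-out maneuver occurs. First, I would set $N_{ci} = 0$ and $N_{co} = 0$, so that the index sets $\mathcal{N}_{ci}$ and $\mathcal{N}_{co}$ in the theorem statement are both empty. Under this specialization, the additive contribution from the maneuvers vanishes, namely $N + N_{ci} - N_{co} = N$, and the bound in Eq. (\ref{equation_DNMPC_convergence_in_N_steps_dynamic_time}) should collapse exactly to the $N$-step bound asserted by Lemma \ref{lemma_DNMPC_convergence_in_N_steps}.

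The single point requiring care is the meaning of the max term in Eq. (\ref{equation_DNMPC_convergence_in_N_steps_dynamic_time}) when no maneuvers take place, since the quantifier ranges over empty index sets. I would resolve this by adopting the standard convention that a maximum over an empty index set is taken to be $0$, i.e.\ the reference time at which the controller is initialized; equivalently, one declares that in the static case the ``latest maneuver time'' is $t=0$. With this convention, $\max_{i,j}[t_{ci,i}, t_{co,j} \,|\, \forall i \in \varnothing, \forall j \in \varnothing] = 0$, and therefore $t_\text{conv} = 0 + N + 0 - 0 = N$, matching the static bound.

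Finally, I would observe that Assumption \ref{assumption_spanning_tree} is common to both statements and is preserved under the specialization because no vertices are added to or removed from the platoon graph when $N_{ci} = N_{co} = 0$. Hence every hypothesis of Theorem \ref{theorem_DNMPC_convergence_in_N_steps_dynamic} holds, and the conclusion $\b{y}_i^p(N_p | t) = \b{y}_{\text{des},i}(N_p | t)$ for all $t \geq N$ coincides with the conclusion of Lemma \ref{lemma_DNMPC_convergence_in_N_steps}. The main (and really only) obstacle is the notational convention for the empty-set maximum; once that is fixed, the remainder of the argument is an immediate substitution. No further dynamical or graph-theoretic arguments are needed, since Theorem \ref{theorem_DNMPC_convergence_in_N_steps_dynamic} already internalizes the static convergence analysis from \cite{zheng2017distributed} in its proof.
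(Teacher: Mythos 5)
Your proposal is correct and follows essentially the same route as the paper's own proof, which likewise substitutes $N_{ci}=N_{co}=0$ into Eq.~(\ref{equation_DNMPC_convergence_in_N_steps_dynamic_time}) and reads off $t_\text{conv}=0+N+0-0=N$. Your explicit handling of the empty-set maximum convention is a point the paper leaves implicit, but it does not change the argument.
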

\begin{proof}
When neither cut-in nor cut-out happens, the time of convergence is $t_\text{conv} = 0 + N + 0 + 0 = N$ according to Eq. (\ref{equation_DNMPC_convergence_in_N_steps_dynamic_time}). 
\end{proof}

\begin{remark}
Two extreme special cases of the dynamic platoon are as the following examples:

Example 1) One cut-in at $t=0$ and one cut-out at $t=N$: According to Eq. (\ref{equation_DNMPC_convergence_in_N_steps_dynamic_time}), the platoon converges in $t = N + N + 1 - 1 = 2 N$. It is correct because before the cut-out, the platoon contains $N + 1$ vehicles until time $N$. When cut-out happens, the platoon is changed to a platoon with $N$ vehicles which converges in $N$ time steps according to Lemma \ref{lemma_DNMPC_convergence_in_N_steps}. 

Example 2) One cut-in at $t=N$ and one cut-out at $t=0$: According to Eq. (\ref{equation_DNMPC_convergence_in_N_steps_dynamic_time}), the platoon converges in $t = N + N + 1 - 1 = 2 N$ which is correct because in $t \in [0, N]$, the platoon includes $N - 1$ vehicles until time $N$. When cut-in happens, the platoon is modified to a platoon with $N$ vehicles which converges in $N$ time steps according to Lemma \ref{lemma_DNMPC_convergence_in_N_steps}. 
\end{remark}

\section{Driving Experience Analysis Using Distributed Metric Learning }\label{section_metric_learning}

The paper \cite{zheng2017distributed} uses manual metrics in the DNMPC. 
The extension of the DNMPC for the dynamic maneuvers, reported in the previous section, uses manual metrics satisfying Lemmas \ref{lemma_stability_sufficient_condition} and \ref{lemma_metric_matrix_positive_definite}. 
However, for the sake of analyzing the behaviors of metrics in DNMPC, we can learn the subspaces of metrics on which the data can be projected (see Lemma \ref{lemma_metric_subspace_projection}). The subspaces can properly show the difference of the predicted and assumed variables in the DNMPC for driving experience analysis. 
For learning the metrics, ADMM can be used as a distributed optimization tool. In the following, we first propose the distributed optimization for the metric learning in DNMPC. Thereafter, we provide its solution using ADMM. 
Note that it is essential that ADMM should not change the convergence behavior of the DNMPC for dynamic maneuvers so that the behavior analysis is valid for the proposed methodology. 
Using the theory of optimization, we provide the theory supporting this claim (see Theorem \ref{theorem_ML_convergence_in_N_steps}). 

\subsection{Optimization}

To formulate the optimization problem for distributed metric learning in platooning, we first provide some required details.

\begin{remark}\label{remark_belong_to_epsilon_positive_definite_in_optimization}
If the weight matrices belong to the positive semi-definite cone, many of them will tend to become zero matrices gradually in optimizing Eq. (\ref{equation_objective}). This is because in Eq. (\ref{equation_metric_distance}), the metric becomes minimized, i.e. zero, by making the weight matrix zero. 
To avoid this, the weight matrices should belong to the $\varepsilon$-positive definite cone (which is fine because of Corollary \ref{corollary_metric_matrix_epsilon_positive_definite}), so the optimization problem concentrates on a valid gradient direction. 
\end{remark}

\begin{corollary}\label{corollary_constraint_F}
According to Corollary \ref{corollary_metric_matrix_epsilon_positive_definite}, Remark \ref{remark_belong_to_epsilon_positive_definite_in_optimization}, and Lemma \ref{lemma_stability_sufficient_condition}, the weight matrix $\b{F}_i$, for the $i$-th FV, should satisfy Eq. (\ref{equation_optimization_ML_cons8}) to have asymptotic stability of the platoon. 
\end{corollary}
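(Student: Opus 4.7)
The plan is to chain the three cited results into a single algebraic constraint on $\b{F}_i$, with no new calculation required. First I would invoke Lemma \ref{lemma_stability_sufficient_condition}, which yields the structural shape of the constraint: asymptotic stability is guaranteed (under Assumption \ref{assumption_spanning_tree}) as soon as $\b{F}_i - \sum_{j \in \mathbb{O}_i} \b{G}_j \succeq \b{0}$ for every $i \in \mathcal{N}$. This fixes the matrix combination that must be constrained, but only at the level of positive semi-definiteness.

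Next I would bring in Remark \ref{remark_belong_to_epsilon_positive_definite_in_optimization}: in the metric-learning step, the plain PSD constraint is not enough, because the local objective (\ref{equation_objective}) can be driven to zero by collapsing any of the weight matrices to $\b{0}$, so the optimizer has a degenerate descent direction. The remedy is to promote $\succeq \b{0}$ to the strict cone $\succ_\varepsilon \b{0}$, which pushes the eigenvalues off zero and prevents the collapse. Applying this promotion to the stability inequality produces the candidate constraint $\b{F}_i - \sum_{j \in \mathbb{O}_i} \b{G}_j \succ_\varepsilon \b{0}$, which matches the form of Eq.~(\ref{equation_optimization_ML_cons8}).

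Finally, I would check compatibility by appealing to Corollary \ref{corollary_metric_matrix_epsilon_positive_definite}: tightening to the $\varepsilon$-cone does not invalidate the metric interpretation of $\b{F}_i$, because $\mathbb{S}_\varepsilon^n \subset \mathbb{S}_+^n$ (Corollary \ref{corollary_espilon_positive_definite_subset}), so the distance $\|\cdot\|_{\b{F}_i}$ in the objective (\ref{equation_objective}) remains a bona fide convex metric. Combining the three ingredients, the constraint needed to simultaneously (i) preserve the sufficient stability condition, (ii) prevent the metric-learning optimizer from zeroing out $\b{F}_i$, and (iii) keep $\b{F}_i$ a valid metric weight, is exactly Eq.~(\ref{equation_optimization_ML_cons8}).

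The main obstacle, such as it is, is not computational but logical bookkeeping: I must verify that replacing $\succeq \b{0}$ by $\succ_\varepsilon \b{0}$ strengthens, rather than violates, the sufficient condition of Lemma \ref{lemma_stability_sufficient_condition}. Since any matrix in $\mathbb{S}_\varepsilon^n$ lies in $\mathbb{S}_+^n$, the strengthened inequality trivially implies the original one, so stability is retained; this implication is the single non-trivial step that ties the three cited results together.
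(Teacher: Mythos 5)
Your strategy of chaining the three cited results is the right one---the paper offers no separate proof of this corollary beyond that chain---but the constraint you land on is not Eq.~(\ref{equation_optimization_ML_cons8}), and the discrepancy reflects a real misreading of where the $\varepsilon$-cone is needed. You promote the stability inequality itself, arriving at $\b{F}_i - \sum_{j \in \mathbb{O}_i} \b{G}_j \succeq_\varepsilon \b{0}$, and assert this ``matches the form'' of (\ref{equation_optimization_ML_cons8}). It does not: the actual constraint is a case split. When $\mathbb{O}_i \neq \varnothing$ it keeps the difference at plain positive semi-definiteness, $\b{F}_i - \sum_{j \in \mathbb{O}_i} \b{G}_j \succeq \b{0}$, exactly as in Lemma~\ref{lemma_stability_sufficient_condition}; the $\varepsilon$-strengthening is applied only to $\b{F}_i$ itself, and only when $\mathbb{O}_i = \varnothing$.

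The missing idea is that the anti-collapse concern of Remark~\ref{remark_belong_to_epsilon_positive_definite_in_optimization} attaches to the weight matrix $\b{F}_i$ that actually appears in the objective~(\ref{equation_objective}), not to the difference $\b{F}_i - \sum_{j}\b{G}_j$, which appears nowhere in the objective. When $\mathbb{O}_i \neq \varnothing$, constraint~(\ref{equation_optimization_ML_cons7}) already forces each $\b{G}_j \succeq_\varepsilon \b{0}$, so $\b{F}_i \succeq \sum_{j \in \mathbb{O}_i} \b{G}_j$ automatically keeps $\b{F}_i$ bounded away from the zero matrix; no tightening of the difference is required, and imposing $\succeq_\varepsilon$ on the difference needlessly shrinks the feasible set of the metric-learning problem. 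When $\mathbb{O}_i = \varnothing$, the sum is empty, Lemma~\ref{lemma_stability_sufficient_condition} degenerates to $\b{F}_i \succeq \b{0}$, and this is precisely the situation the Remark warns about---the optimizer could drive $\b{F}_i \to \b{0}$---so the explicit requirement $\b{F}_i \succeq_\varepsilon \b{0}$ must be imposed there, and Corollary~\ref{corollary_metric_matrix_epsilon_positive_definite} guarantees it remains a valid metric weight. Your proposal omits this case entirely, yet it is the only case in which the $\varepsilon$-cone enters the constraint on $\b{F}_i$ at all. Your closing observation that $\mathbb{S}_\varepsilon^n \subset \mathbb{S}_+^n$ preserves the sufficient condition is correct, but it is deployed in service of the wrong constraint.
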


According to Remark \ref{remark_belong_to_epsilon_positive_definite_in_optimization} and Corollary \ref{corollary_constraint_F}, the weight matrices in the metrics should satisfy $\b{Q}_i \succeq_\varepsilon \b{0}$, $R_i \geq \varepsilon$, $\b{G}_i \succeq_\varepsilon \b{0}$, $\b{F}_i \succeq_\varepsilon \b{0}$, and $\b{F}_i - \sum_{j \in \mathbb{O}_i} \b{G}_j \succeq \b{0}, \forall i \in \mathcal{N}$. 
Moreover, as seen in Eq. (\ref{equation_objective}), the term with $\b{Q}_i$ requires connection to LV (i.e., $p = 1$) to have the desired output and the term with $\b{G}_i$ needs $\mathbb{N}_i \neq \varnothing$. Hence, if not satisfying these conditions, they must be zero matrices. 
Combining all these constraints with Problem (\ref{equation_optimization_DNMPC}) results in the following optimization problem, where a new objective variable, $\b{\Theta}_i \in \mathbb{R}^2$, is added to make the problem distributed \cite{boyd2011distributed}:
\begin{mini!}|l|[2]             
    {\mathcal{U}_i^p(t), \b{Q}_i, \b{\Theta}_i, R_i, \b{F}_i, \b{G}_i}                 
    {J_i(y_i^p, u_i^p, y_i^a, y_{-i}^a) \label{equation_optimization_ML_obj}}  
    {\label{equation_optimization_ML}}             
    {}                                
    \addConstraint{\hspace{-1cm}\text{Constraints }}{ \text{(\ref{equation_optimization_DNMPC_cons1})} \text{ to } \text{(\ref{equation_optimization_DNMPC_cons6})} \label{equation_optimization_ML_cons1}}    
    \addConstraint{\hspace{-2.5cm}\b{\Theta}_i}{\succeq_\varepsilon \b{0} \label{equation_optimization_ML_cons2}}  
    \addConstraint{\hspace{-2.5cm}\b{Q}_i}{= \b{0}, \quad \text{if } p_i = 0 \label{equation_optimization_ML_cons3}} 
    \addConstraint{\hspace{-2.5cm}\b{Q}_i - \b{\Theta}_i}{= \b{0} \label{equation_optimization_ML_cons4}}  
    \addConstraint{\hspace{-2.5cm}R_i}{\geq \varepsilon
    \label{equation_optimization_ML_cons5}} 
    \addConstraint{\hspace{-2.5cm}}{
    \left\{
        \begin{array}{ll}
            \b{G}_i \succeq_\varepsilon \b{0} & \text{if } \mathbb{N}_i \neq \varnothing \\
            \b{G}_i = \b{0} & \text{O.W.}
        \end{array}
    \right.
    \label{equation_optimization_ML_cons7}}
    \addConstraint{\hspace{-2.5cm}}{
    \left\{
        \begin{array}{ll}
            \b{F}_i \succeq_\varepsilon \b{0} & \text{if } \mathbb{O}_i = \varnothing \\
            \b{F}_i - \sum_{j \in \mathbb{O}_i} \b{G}_j \succeq \b{0} & \text{O.W.}
        \end{array}
    \right.
    \label{equation_optimization_ML_cons8}},
\end{mini!}

% \addConstraint{\hspace{-2cm}\b{F}_i - \sum_{j \in \mathbb{O}_i} \b{G}_j}{\succeq \b{0}

\begin{proposition}\label{lemma_strong_duality}
The Problem (\ref{equation_optimization_ML}) has strong duality. 
\end{proposition}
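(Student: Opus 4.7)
The plan is to invoke Slater's constraint qualification for convex programs. First I would note that Problem~(\ref{equation_optimization_ML}) is convex: the objective is convex by Lemma~\ref{lemma_convex_objective_function}; the $\varepsilon$-positive definite cone $\mathbb{S}_\varepsilon^n$ is convex, being a translate of the usual PSD cone, namely $\{\b{A} : \b{A} \succeq \varepsilon \b{I}\}$, as is apparent from the proof of Theorem~\ref{theorem_projection_epsilon_positive_definite}; the coupling inequality $\b{F}_i - \sum_{j\in\mathbb{O}_i}\b{G}_j \succeq \b{0}$ is linear in the matrix variables; the box constraint on $u_i^p$ is polyhedral; and the additional equalities (\ref{equation_optimization_ML_cons3})--(\ref{equation_optimization_ML_cons4}) are affine in the decision variables.

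Next I would exhibit a strictly feasible Slater point. The crucial observation is that the newly introduced weight matrices $\b{Q}_i,\b{\Theta}_i,R_i,\b{F}_i,\b{G}_i$ are decoupled from the control sequence $\mathcal{U}_i^p$ appearing in the dynamical equalities (\ref{equation_optimization_DNMPC_cons1})--(\ref{equation_optimization_DNMPC_cons6}). It therefore suffices to (a) select any admissible trajectory guaranteed by Lemma~\ref{lemma_DNMPC_convergence_in_N_steps} whose inputs lie strictly in the interior of $[u_{\min,i}, u_{\max,i}]$, and (b) choose, for some $\delta>0$, $\b{Q}_i = \b{\Theta}_i = (\varepsilon+\delta)\b{I}$ (or $\b{0}$ whenever $p_i = 0$), $R_i = \varepsilon + \delta$, $\b{G}_j = (\varepsilon+\delta)\b{I}$ (or $\b{0}$ whenever $\mathbb{N}_j = \varnothing$), and $\b{F}_i = c\b{I}$ with a single constant $c$ taken large enough that $\b{F}_i - \sum_{j\in\mathbb{O}_i}\b{G}_j \succ \b{0}$ simultaneously for every $i \in \mathcal{N}$. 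Every $\varepsilon$-positive definite cone constraint then holds strictly, while all equality constraints are satisfied by construction, so Slater's condition applies and the standard strong duality theorem for convex programs \cite{boyd2004convex} delivers the claim.

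The main obstacle is the coupling across vehicles introduced by $\b{F}_i - \sum_{j\in\mathbb{O}_i}\b{G}_j \succeq \b{0}$, which ties the matrix variables of different followers together and forces the Slater point to be constructed jointly over the entire platoon graph rather than vehicle-by-vehicle; this is resolved by the uniform choice of $c$ above, which is well-defined because $|\mathbb{O}_i|$ is bounded by $N$. A secondary subtlety is that $\b{\phi}_i$ and $h_i$ are nonlinear in $v_i^p$, so (\ref{equation_optimization_DNMPC_cons1}) and (\ref{equation_optimization_DNMPC_cons6}) are not affine equalities in the raw state variables; however, the states are eliminated in favor of $\mathcal{U}_i^p$ and the convexity asserted in Lemma~\ref{lemma_convex_objective_function} implicitly treats the induced feasible set as convex, so these equalities do not obstruct Slater's condition -- only the cone and box constraints must be made strict, which has just been done.
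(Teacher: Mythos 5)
Your proof rests on the same pillar as the paper's: convexity of Problem~(\ref{equation_optimization_ML}) plus Slater's condition implies strong duality. The paper's own proof is essentially a two-sentence assertion -- objective convex by Lemma~\ref{lemma_convex_objective_function}, constraints are cone constraints, cite Slater -- and never exhibits a strictly feasible point. You go further by actually constructing one, and your handling of the coupling constraint $\b{F}_i - \sum_{j\in\mathbb{O}_i}\b{G}_j \succeq \b{0}$ via a uniformly large $c$ is a genuine improvement, since that joint constraint is the one place where a vehicle-by-vehicle construction would not obviously suffice. The weak link in your argument is exactly the one you flag yourself: constraints (\ref{equation_optimization_DNMPC_cons1}) and (\ref{equation_optimization_DNMPC_cons6}) are \emph{nonlinear} equalities (the dynamics contain the quadratic drag term $C_{A,i}v_i^2$ and $h_i$ is quadratic in $v_i^p$), and a feasible set cut out by non-affine equalities is generically non-convex, so Slater's condition does not apply off the shelf. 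Your resolution -- that eliminating the states in favor of $\mathcal{U}_i^p$ together with Lemma~\ref{lemma_convex_objective_function} ``implicitly'' renders the induced feasible set convex -- is not a proof: that lemma only asserts convexity of the objective in its arguments, not convexity of the objective or feasible set as functions of the inputs composed with nonlinear dynamics. To be fair, the paper's proof silently commits the same sin by declaring ``the problem is convex'' on the strength of the cone constraints alone; you at least name the obstacle. But naming it and waving at Lemma~\ref{lemma_convex_objective_function} does not close it, so if this step is to be made rigorous, either the dynamics must be linearized/relaxed or strong duality must be argued for the restricted problem in the weight matrices with $\mathcal{U}_i^p$ held fixed (which is in fact how the ADMM updates in the paper use it).
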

\begin{proof}
According to Lemma \ref{lemma_convex_objective_function}, the objective function in Problem (\ref{equation_optimization_ML}), as in Problem (\ref{equation_optimization_DNMPC}), is convex. 
As the constraints belong to the positive semi-definite or $\varepsilon$-positive definite cones, the problem is convex. 
According to Slater's condition on convex problems \cite{slater2014lagrange}, strong duality is guaranteed in Problem (\ref{equation_optimization_ML}) {\cite[Proposition 5.2.1]{bertsekas1999nonlinear}}.
\end{proof}

\subsection{Solving Optimization with ADMM}

The updates of objective variables are as the following, according to ADMM \cite{boyd2011distributed}:
\begin{subequations}\label{equation_ADMM_update}
\begin{align}
& \mathcal{U}_i^{p, (\kappa+1)}(t) := \text{Solution to Problem \! (\ref{equation_optimization_DNMPC})} \nonumber \\ 
& \qquad\qquad\qquad\qquad\qquad \text{with } \b{Q}_i^{(\kappa)}\!, R_i^{(\kappa)}\!, \b{F}_i^{(\kappa)}\!, \b{G}_i^{(\kappa)}\!, \label{equation_ADMM_update_U} \\
&\b{Q}_i^{(\kappa+1)} := 
\left\{
    \begin{array}{ll}
        \b{0} & \text{if } p_i = 0, \\
        \arg \min_{\b{Q}_i} \big( J_i + (\rho/2)\, \|\b{Q}_i \\
        ~~~~~~~~~~~~ - \b{\Theta}_i^{(\kappa)} 
         + \b{\Omega}_i^{(\kappa)}\|_F^2 \big) & \text{O.W.}
    \end{array}
\right. \label{equation_ADMM_update_Q} \\
&\b{\Theta}_i^{(\kappa+1)} := \arg \min_{\b{\Theta}_i} \Big( (\rho/2)\, \|\b{Q}_i^{(\kappa+1)} - \b{\Theta}_i + \b{\Omega}_i^{(\kappa)}\|_F^2, \nonumber \\ 
&~~~~~~~~~~~~~~~~~~~~~~~~~ \text{s.t. } \b{\Theta}_i \succeq_\varepsilon \b{0} \Big), \label{equation_ADMM_update_Theta} \\
&\b{\Omega}_i^{(\kappa+1)} := \b{\Omega}_i^{(\kappa)} + \b{Q}_i^{(\kappa+1)} - \b{\Theta}_i^{(\kappa+1)}, \label{equation_ADMM_update_Omega} \\
&R_i^{(\kappa+1)} := \arg \min_{R_i} \big( J_i, \text{ s.t. } R_i \geq \varepsilon \big), \label{equation_ADMM_update_R} \\
&\b{G}_i^{(\kappa+1)} := \arg \min_{\b{G}_i} \big( J_i, \text{ s.t. Constraint (\ref{equation_optimization_ML_cons7})} \big), \label{equation_ADMM_update_G} \\
&\b{F}_i^{(\kappa+1)} := \arg \min_{\b{F}_i} \big( J_i, \text{ s.t. Constraint (\ref{equation_optimization_ML_cons8})}\big), \label{equation_ADMM_update_F}
\end{align}
\end{subequations}
where $\kappa$ denotes the iteration of ADMM. 
We use the projected gradient method \cite{boyd2004convex} to solve the Eqs. (\ref{equation_ADMM_update_Q}), (\ref{equation_ADMM_update_Theta}), and (\ref{equation_ADMM_update_R})--(\ref{equation_ADMM_update_F}). We also use $\|\b{x}\|_{\b{A}} = \b{x}^\top \b{A} \b{x} = \textbf{tr}(\b{x}^\top \b{A} \b{x}) = \textbf{tr}(\b{x} \b{x}^\top \b{A})$, $\|\b{A}\|_F^2 = \textbf{tr}(\b{A}^\top \b{A})$, and $\partial \|\b{x}\|_{\b{A}} / \partial \b{A} = \b{x} \b{x}^\top$ for calculation of gradients. For projection onto the sets of $\varepsilon$-positive definite cone in the projected gradient method, we use Theorem \ref{theorem_projection_epsilon_positive_definite}. To update the control inputs, by Eq. (\ref{equation_ADMM_update_U}), we use the interior point method \cite{boyd2004convex} as also done in \cite{zheng2017distributed}. 

\begin{theorem}\label{theorem_ADMM_convergence}
The solution of Problem (\ref{equation_optimization_ML}) by iteratively performing Eq. (\ref{equation_ADMM_update}) converges to the optimal value satisfying the Karush-Kuhn-Tucker (KKT) conditions. 
\end{theorem}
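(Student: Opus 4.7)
The plan is to reduce the claim to the standard two-block ADMM convergence theorem of Boyd et al.\ 2011, which for a convex splitting $\min f(x) + g(z)$ with linear coupling $Ax + Bz = c$ guarantees vanishing primal and dual residuals, convergence of the objective to the primal optimum, and convergence of the scaled dual variable to an optimal dual multiplier, provided the component functions are closed proper convex and the unaugmented Lagrangian has a saddle point. First I would identify the splitting implicit in Eq.~(\ref{equation_ADMM_update}): the variable $\b{Q}_i$ is duplicated as $\b{\Theta}_i$, the linking equation $\b{Q}_i - \b{\Theta}_i = \b{0}$ is enforced by the scaled dual $\b{\Omega}_i$ (so that $A = \b{I}$, $B = -\b{I}$, $c = \b{0}$), and the remaining decision variables $\mathcal{U}_i^p$, $R_i$, $\b{G}_i$, $\b{F}_i$ are absorbed into the same block as $\b{Q}_i$ through indicator functions of their feasible sets.

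Second, I would verify the two hypotheses. Convexity of the two component functions follows from Lemma~\ref{lemma_convex_objective_function} together with convexity of every constraint set in Problem~(\ref{equation_optimization_ML}): the positive semi-definite cone and the halfline $R_i \geq \varepsilon$ are convex, and by Corollary~\ref{corollary_espilon_positive_definite_subset} the $\varepsilon$-positive definite cone is a translate of a convex cone. Proposition~\ref{lemma_strong_duality} supplies a primal-dual saddle point of the unaugmented Lagrangian. With both hypotheses in place, the standard theorem delivers $\b{Q}_i^{(\kappa)} - \b{\Theta}_i^{(\kappa)} \to \b{0}$, $\b{\Omega}_i^{(\kappa)} \to \b{\Omega}_i^\star$, and convergence of the objective. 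I would then translate these three modes of convergence into the four KKT conditions: the vanishing primal residual combined with the projections onto the $\varepsilon$-positive definite and positive semi-definite cones in Eqs.~(\ref{equation_ADMM_update_Theta}) and (\ref{equation_ADMM_update_G})--(\ref{equation_ADMM_update_F}) (computed via Theorem~\ref{theorem_projection_epsilon_positive_definite}) yields primal feasibility of the limit; the vanishing dual residual yields stationarity; convergence of $\b{\Omega}_i$ yields dual feasibility; and the active-set structure imposed by each projection step at every iterate yields complementary slackness in the limit.

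The main obstacle I expect is twofold. First, the $\b{Q}$-block subproblem bundles several distinct variables whose updates in Eqs.~(\ref{equation_ADMM_update_Q}) and (\ref{equation_ADMM_update_R})--(\ref{equation_ADMM_update_F}) are performed one variable at a time rather than as a single joint minimization, so a block-coordinate argument (or an equivalent reformulation that preserves exactness of the inner solves) is needed to square this schedule with the two-block template of Boyd et al.'s theorem; I would also need to verify that the inexactness introduced by solving each subproblem by a finite number of projected-gradient or interior-point iterations does not break the convergence guarantee, invoking the standard inexact-ADMM extension if necessary. Second, the $\varepsilon$-positive definite cone defined by strict inequality in Definition~\ref{definition_epsilon_positive_definite} is not closed, so to invoke the theorem in its usual form one must either work with its closure $\{\b{A} : \b{A} \succeq \varepsilon \b{I}\}$, as the projection in Theorem~\ref{theorem_projection_epsilon_positive_definite} implicitly does, or argue that the iterates remain bounded away from the boundary; pinning down this technicality is the subtlest point in an otherwise routine verification.
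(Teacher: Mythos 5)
Your proposal matches the paper's proof in essence: the paper likewise reduces the claim to the standard two-block ADMM convergence argument of Boyd et al., writing the augmented Lagrangian for the consensus constraint $\b{Q}_i - \b{\Theta}_i = \b{0}$ with scaled dual $\b{\Omega}_i$, lumping the remaining constraints into extra multiplier terms, and then exhibiting the standard Lyapunov function $V^{(\kappa)}$ whose monotone decrease (cited from Boyd et al.\ and Giesen et al.) together with strong duality (Proposition~\ref{lemma_strong_duality}) gives convergence of the primal and dual iterates. The two obstacles you flag --- the one-variable-at-a-time updates of $R_i$, $\b{G}_i$, $\b{F}_i$ in Eqs.~(\ref{equation_ADMM_update_R})--(\ref{equation_ADMM_update_F}) exceeding the two-block template, and the non-closedness of the $\varepsilon$-positive definite cone --- are genuine and are not addressed in the paper's proof either, so your treatment is, if anything, the more careful one.
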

\begin{proof}
We put together all the equality constraints and also inequality constraints amongst Eqs. (\ref{equation_optimization_ML_cons1}) and (\ref{equation_optimization_ML_cons5})--(\ref{equation_optimization_ML_cons8}), which are constant w.r.t. $\b{Q}_i$, $\b{\Theta}_i$, and $\b{\Omega}_i$. Let $\b{\Sigma}_{i,1}$ and $\b{\Sigma}_{i,2}$ denote these two groups of constraints, respectively. 
The augmented Lagrangian of Problem (\ref{equation_optimization_ML}) is \cite{boyd2011distributed,giesen2018distributed}:
\begin{equation*}
\begin{aligned}
&\mathcal{L}_\rho = J_i(y_i^p, u_i^p, y_i^p, y_{-i}^p) + \textbf{tr}\big(\b{\Lambda}_i^\top (\b{Q}_i - \b{\Theta}_i)\big) \\
&+ (\rho/2)\, \|\b{Q}_i - \b{\Theta}_i\|_F^2 + \textbf{tr}(\b{\Delta}_{i,1}^\top \b{\Sigma}_{i,1}) + \textbf{tr}(\b{\Delta}_{i,2}^\top \b{\Sigma}_{i,2}) \\
&= J_i(y_i^p, u_i^p, y_i^p, y_{-i}^p) + (\rho/2)\, \|\b{Q}_i - \b{\Theta}_i + \b{\Omega}_i\|_F^2 \\
&- (1/(2\rho))\, \|\b{\Lambda}_i\|_F^2 + \textbf{tr}(\b{\Delta}_{i,1}^\top \b{\Sigma}_{i,1}) + \textbf{tr}(\b{\Delta}_{i,2}^\top \b{\Sigma}_{i,2}),
\end{aligned}
\end{equation*}
where $\b{\Lambda}_i, \b{\Sigma}_{i,1}, \b{\Sigma}_{i,2} \in \mathbb{R}^{2 \times 2}$ are the Lagrange multipliers, $\rho\! >\! 0$ is the parameter of $\mathcal{L}_\rho$, and $\b{\Omega}_i := (1/\rho) \b{\Lambda}_i$ is the dual variable. Note that the term $(1/(2\rho))\, \|\b{\Lambda}_i\|_F^2$ is a constant w.r.t. $\b{\Theta}_i$ and $\b{Q}_i$ and can be dropped. 

\begin{figure*}[!t]
\centering
\includegraphics[width=6.5in]{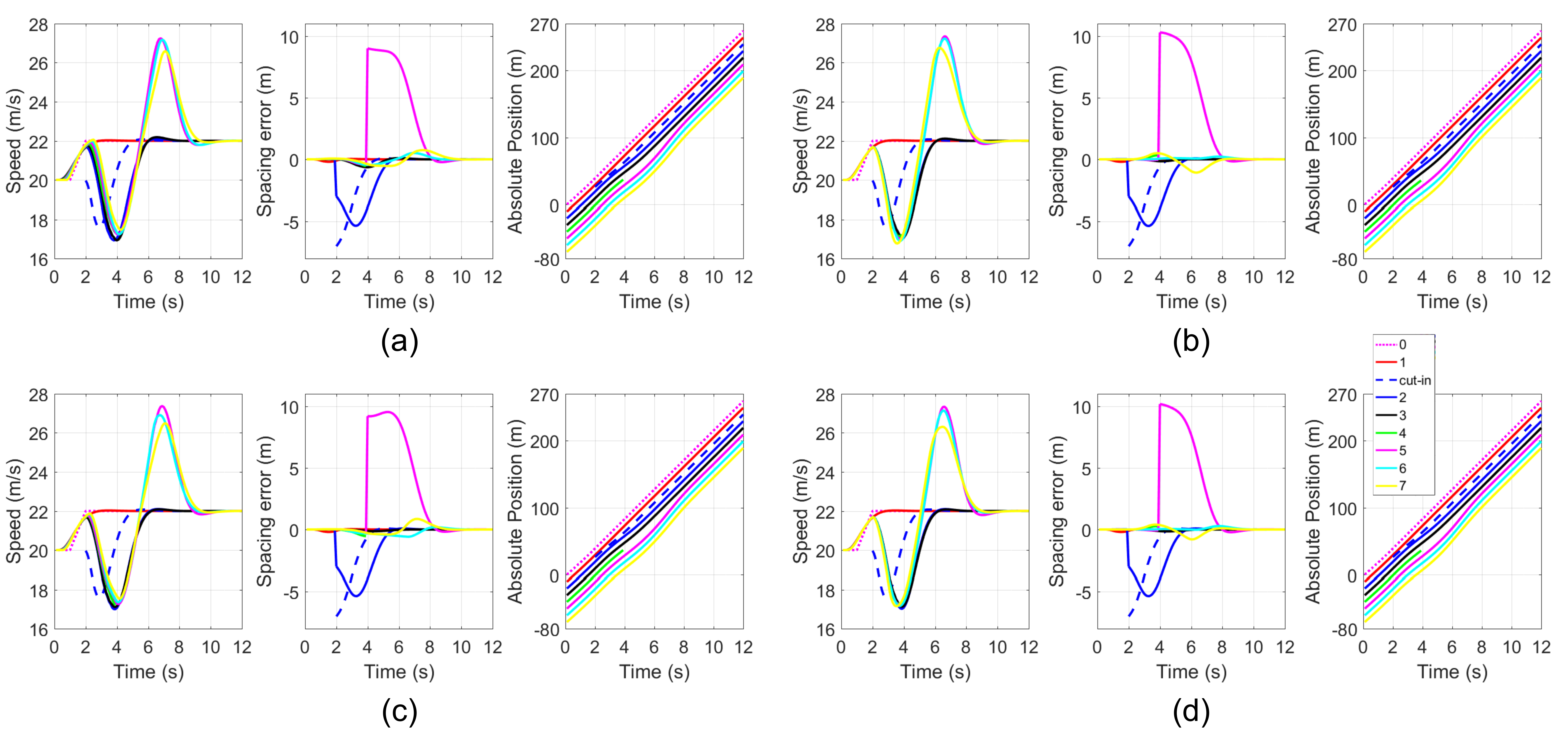}
\caption{Speed, relative spacing error with the preceding vehicle, and absolute position versus time: (a) PF, (b) PLF, (c) TPF, and (d) TPLF.}
\label{figure_results_convergence}
\end{figure*}

\begin{figure*}[!t]
\centering
\includegraphics[width=6.55in]{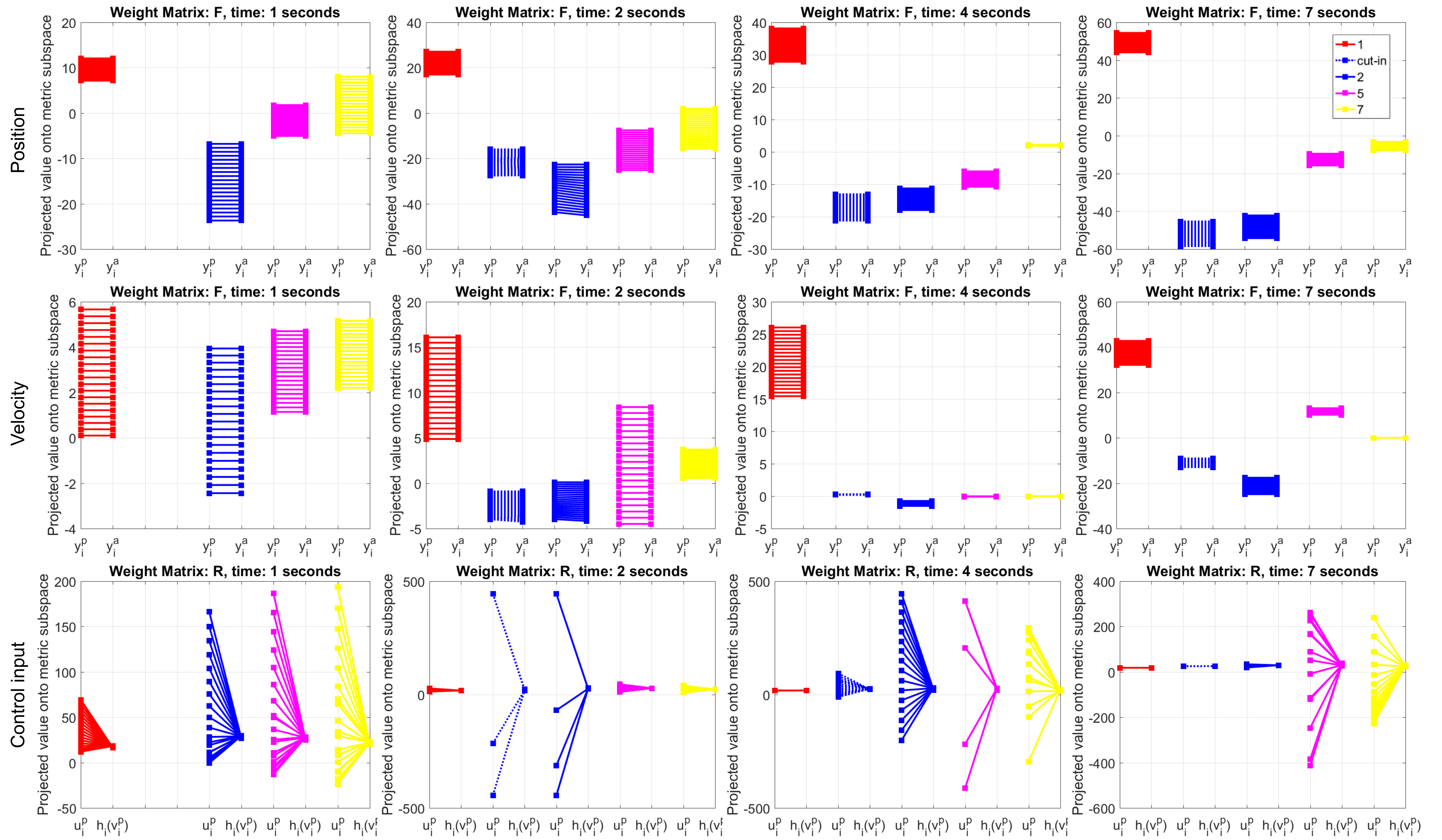}
\caption{Projected position, velocity, and control input values onto the metric subspaces (see Lemma \ref{lemma_metric_subspace_projection}) for driving comfort and fuel economy. The plots of weights $\b{F}$ and $R$ correspond to TPF and PF topologies, respectively.}
\label{figure_results_subspaces_behavior}
\end{figure*}

\begin{figure*}[!t]
\centering
\includegraphics[width=6.55in]{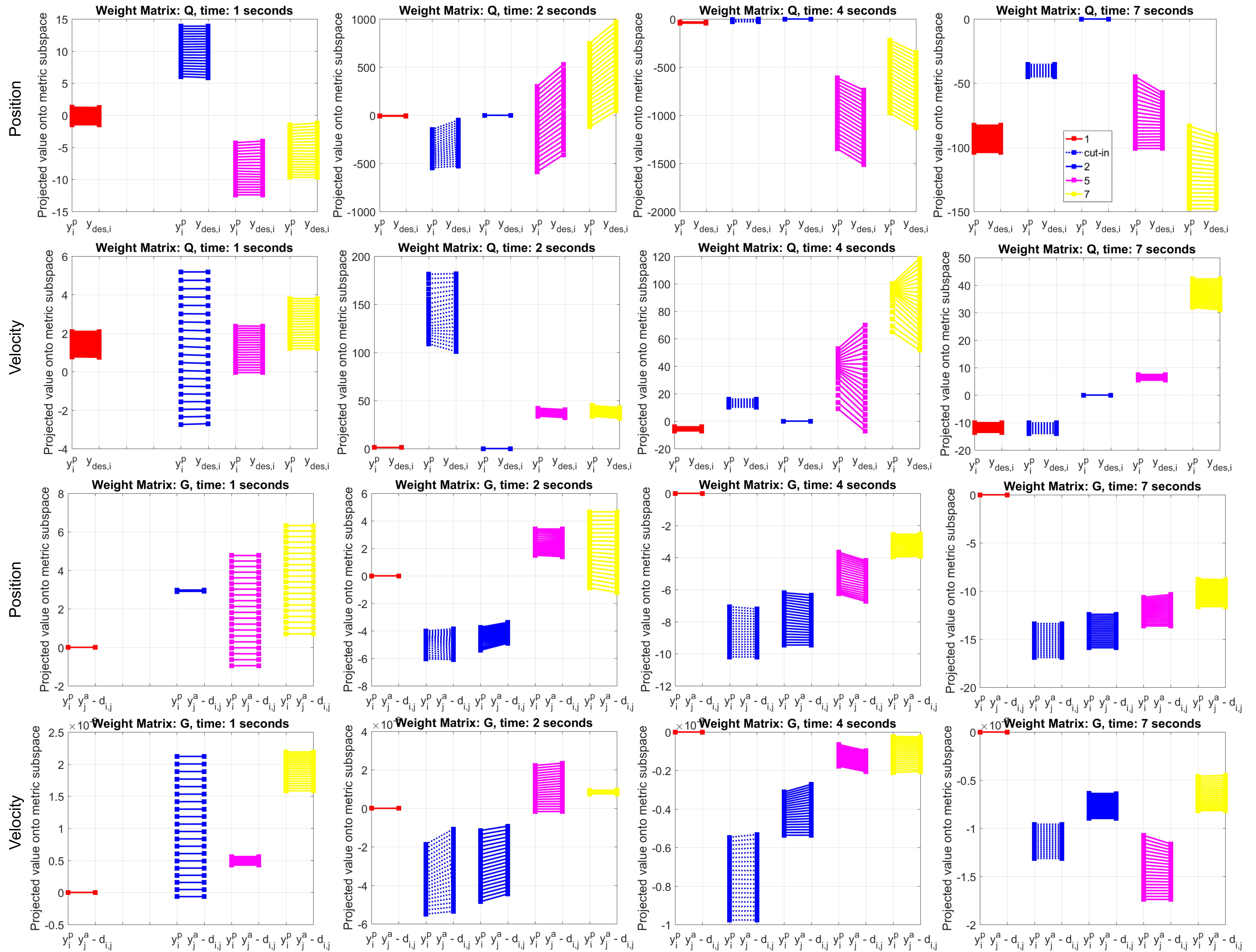}
\caption{Projected position and  velocity values onto the metric subspaces (see Lemma \ref{lemma_metric_subspace_projection}) for absolute and relative convergence. The plots of weights $\b{Q}$ and $\b{G}$ correspond to TPLF and PLF topologies, respectively.}
\label{figure_results_subspaces_convergence}
\end{figure*}

Consider the Lyapunov function for the $\kappa$-th iteration in ADMM {\cite[Appendix A]{boyd2011distributed}}: $V^{(\kappa)} := \, \frac{1}{\rho} \|\b{\Omega}_i^{(\kappa)} - \b{\Omega}_i^*\|_F^2 + \frac{1}{\rho} \|\b{\Delta}_{i,1}^{(\kappa)} - \b{\Delta}_{i,1}^*\|_F^2 + \frac{1}{\rho} \|\b{\Delta}_{i,2}^{(\kappa)} - \b{\Delta}_{i,2}^*\|_F^2 + \rho \|\b{\Theta}_i^{(\kappa)} - \b{\Theta}_i^*\|_F^2$ where the star superscript denotes the optimal primal and dual variables. 
Following the approach of {\cite[Lemma 4]{giesen2018distributed}} shows that this function is non-increasing. Therefore, the ADMM converges to the optimal solution for the primal and dual variables \cite{giesen2018distributed}, noticing that the problem has strong duality (see Proposition \ref{lemma_strong_duality}). 
\end{proof}

\begin{proposition}\label{lemma_ML_stability}
The platoon under Problem (\ref{equation_optimization_ML}) is asymptotically stable.
\end{proposition}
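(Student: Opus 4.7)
The plan is to reduce the claim to the stability framework already developed in the preliminaries. Problem (\ref{equation_optimization_ML}) differs from Problem (\ref{equation_optimization_DNMPC}) only by (i) treating the weight matrices $\b{Q}_i$, $R_i$, $\b{F}_i$, $\b{G}_i$ (and the auxiliary consensus variable $\b{\Theta}_i$) as optimization variables rather than fixed parameters, and (ii) adding constraints (\ref{equation_optimization_ML_cons2})--(\ref{equation_optimization_ML_cons8}) that confine these weights to appropriate cones. The key observation is that these added constraints are \emph{precisely} the hypotheses required by Lemma \ref{lemma_stability_sufficient_condition}, so the original stability argument transfers verbatim.

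First, I would verify that every feasible point of Problem (\ref{equation_optimization_ML}) yields admissible metric weights. Constraint (\ref{equation_optimization_ML_cons1}) reproduces the dynamics and terminal conditions (\ref{equation_optimization_DNMPC_cons1})--(\ref{equation_optimization_DNMPC_cons6}), so the underlying DNMPC structure is intact. Combining (\ref{equation_optimization_ML_cons2}) and the consensus constraint (\ref{equation_optimization_ML_cons4}) gives $\b{Q}_i \succeq_\varepsilon \b{0}$ whenever $p_i = 1$, and $\b{Q}_i = \b{0}$ otherwise by (\ref{equation_optimization_ML_cons3}); constraint (\ref{equation_optimization_ML_cons5}) gives $R_i \geq \varepsilon > 0$; constraint (\ref{equation_optimization_ML_cons7}) gives $\b{G}_i \succeq_\varepsilon \b{0}$ or $\b{G}_i = \b{0}$; and constraint (\ref{equation_optimization_ML_cons8}) gives either $\b{F}_i \succeq_\varepsilon \b{0}$ (when $\mathbb{O}_i = \varnothing$) or $\b{F}_i - \sum_{j \in \mathbb{O}_i} \b{G}_j \succeq \b{0}$. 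By Corollary \ref{corollary_espilon_positive_definite_subset}, $\varepsilon$-positive definiteness implies positive semi-definiteness, so in every case all weights lie in $\mathbb{S}_+$ and, by Lemma \ref{lemma_metric_matrix_positive_definite} and Corollary \ref{corollary_metric_matrix_epsilon_positive_definite}, define valid metrics.

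With the weights certified as valid, I would then invoke the stability chain from Section \ref{section_preliminaries}. Assumption \ref{assumption_spanning_tree} is in force throughout the paper, so Lemma \ref{lemma_mainProblem_Lyapunov} establishes that the objective (\ref{equation_objective}) is a sum of non-increasing Lyapunov functions along the closed-loop trajectories (the argument only requires that the weight matrices make each summand a valid metric, which we have just verified). Finally, constraint (\ref{equation_optimization_ML_cons8}) guarantees the sufficient condition $\b{F}_i - \sum_{j \in \mathbb{O}_i} \b{G}_j \succeq \b{0}$ for every $i \in \mathcal{N}$ demanded by Lemma \ref{lemma_stability_sufficient_condition}, which therefore yields asymptotic stability of the closed-loop platoon.

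The only subtlety, which will be the main point to address carefully, is that the weights returned by ADMM at the solution to Problem (\ref{equation_optimization_ML}) indeed satisfy the stated constraints rather than approximating them. This is exactly what Theorem \ref{theorem_ADMM_convergence} guarantees: the ADMM iterates (\ref{equation_ADMM_update}) converge to a KKT point of Problem (\ref{equation_optimization_ML}), and at a KKT point all primal constraints are satisfied. Hence the stability-relevant cone conditions hold at convergence, and the argument above applies without modification, completing the proof.
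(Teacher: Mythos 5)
Your proof takes essentially the same route as the paper's: since the objective of Problem (\ref{equation_optimization_ML}) coincides with that of Problem (\ref{equation_optimization_DNMPC}), Lemma \ref{lemma_mainProblem_Lyapunov} supplies the non-increasing Lyapunov function, and the cone constraints (\ref{equation_optimization_ML_cons2})--(\ref{equation_optimization_ML_cons8}) enforce exactly the hypotheses of Corollary \ref{corollary_constraint_F} and Lemma \ref{lemma_stability_sufficient_condition}, which give asymptotic stability. Your version is simply more explicit, spelling out the constraint-by-constraint verification and adding the (correct but not strictly needed here) observation that the ADMM iterates satisfy these constraints at convergence.
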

\begin{proof}
As the objective functions of Problems (\ref{equation_optimization_DNMPC}) and (\ref{equation_optimization_ML}) are equal, according to Lemma \ref{lemma_mainProblem_Lyapunov}, the objective in Problem (\ref{equation_optimization_ML}) is also a non-decreasing Lyapunov function.
Moreover, the constraints in Problem (\ref{equation_optimization_ML}) satisfy Corollaries \ref{corollary_metric_matrix_epsilon_positive_definite} and \ref{corollary_constraint_F}. Hence, the Problem (\ref{equation_optimization_ML}) is asymptotically stable.
\end{proof}

\begin{theorem}\label{theorem_ML_convergence_in_N_steps}
The solution of Problem (\ref{equation_optimization_ML}), obtained by Eq. (\ref{equation_ADMM_update}), converges to the solution of Problem (\ref{equation_optimization_DNMPC}) within at most $t_\text{conv}$ time steps (see Eq. (\ref{equation_DNMPC_convergence_in_N_steps_dynamic_time})), if satisfying Assumption \ref{assumption_spanning_tree}. 
\end{theorem}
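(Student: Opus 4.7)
The plan is to combine three previously established results: (i) the inner-loop convergence of ADMM on Problem (\ref{equation_optimization_ML}) established in Theorem \ref{theorem_ADMM_convergence}, (ii) the $t_\text{conv}$-step convergence of the DNMPC Problem (\ref{equation_optimization_DNMPC}) proved in Theorem \ref{theorem_DNMPC_convergence_in_N_steps_dynamic}, and (iii) the structural equivalence between the control-input subproblem of Problem (\ref{equation_optimization_ML}) and Problem (\ref{equation_optimization_DNMPC}).

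First I would observe that Problem (\ref{equation_optimization_ML}) shares its objective with Problem (\ref{equation_optimization_DNMPC}) by construction, and that Constraint (\ref{equation_optimization_ML_cons1}) explicitly inherits the full set (\ref{equation_optimization_DNMPC_cons1})--(\ref{equation_optimization_DNMPC_cons6}). The additional constraints (\ref{equation_optimization_ML_cons2})--(\ref{equation_optimization_ML_cons8}) act only on the weight matrices $(\b{Q}_i, R_i, \b{F}_i, \b{G}_i)$ and the auxiliary splitting variable $\b{\Theta}_i$; they confine the admissible metrics to the $\varepsilon$-positive definite cone so that Lemma \ref{lemma_metric_matrix_positive_definite} and Lemma \ref{lemma_stability_sufficient_condition} remain in force, but they do not reshape the feasible set of the control variables $\mathcal{U}_i^p(t)$ once the weights are fixed. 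Consequently, the ADMM update (\ref{equation_ADMM_update_U}) is exactly an instance of Problem (\ref{equation_optimization_DNMPC}) evaluated at the current iterate of the weight matrices.

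Next I would invoke Theorem \ref{theorem_ADMM_convergence}: at every time step $t$, as the ADMM iteration index $\kappa$ grows, the primal and dual variables converge to a KKT point of Problem (\ref{equation_optimization_ML}), which is in fact globally optimal because the problem is convex (Lemma \ref{lemma_convex_objective_function}) and enjoys strong duality (Proposition \ref{lemma_strong_duality}). In particular the weights stabilize at valid $\varepsilon$-positive definite values and the control sequence produced by (\ref{equation_ADMM_update_U}) converges to the optimal control of Problem (\ref{equation_optimization_DNMPC}) under those stabilized weights. Combined with Proposition \ref{lemma_ML_stability}, which guarantees asymptotic stability of the closed loop under Problem (\ref{equation_optimization_ML}), the time-domain evolution is governed by the same structural argument used in Theorem \ref{theorem_DNMPC_convergence_in_N_steps_dynamic}. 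Under Assumption \ref{assumption_spanning_tree}, the nilpotency reasoning based on $(\b{D}+\b{P})^{-1}\b{A}$ still applies, since this matrix depends solely on the topology of the platoon and not on the specific metric weights; therefore the predicted output reaches the desired output within at most $t_\text{conv}$ time steps as given by Eq. (\ref{equation_DNMPC_convergence_in_N_steps_dynamic_time}).

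The main obstacle is justifying the interplay between the inner ADMM loop in $\kappa$ and the outer time-step loop in $t$: strictly speaking, at any finite $\kappa$ the weight matrices have not yet converged, so the control law applied at time $t$ solves a perturbed version of Problem (\ref{equation_optimization_DNMPC}). I would close this gap in the spirit of Theorem \ref{theorem_ADMM_convergence} by noting that the Lyapunov function $V^{(\kappa)}$ decreases monotonically and that the dependence of $\mathcal{U}_i^p(t)$ on the weight iterates is continuous, so that either running ADMM to convergence per time step or using a warm-started scheme with sufficiently many inner iterations preserves the $t_\text{conv}$ deadline imposed by the spanning-tree/nilpotency argument of Theorem \ref{theorem_DNMPC_convergence_in_N_steps_dynamic}.
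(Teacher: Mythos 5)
Your proof follows essentially the same route as the paper's: it combines Theorem \ref{theorem_ADMM_convergence} (ADMM convergence to the optimum), Theorem \ref{theorem_DNMPC_convergence_in_N_steps_dynamic} (whose topology-based $t_\text{conv}$ bound is insensitive to the particular weights so long as they remain in the positive semi-definite cone, which the $\varepsilon$-positive definite constraints guarantee via Corollary \ref{corollary_espilon_positive_definite_subset}), the observation that the update (\ref{equation_ADMM_update_U}) is an instance of Problem (\ref{equation_optimization_DNMPC}), and Proposition \ref{lemma_ML_stability} for stability. If anything, you are more explicit than the paper about the inner ADMM loop versus outer time-step loop interaction, which the paper's proof passes over silently.
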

\begin{proof}
According to Theorem \ref{theorem_ADMM_convergence}, Problem (\ref{equation_optimization_ML}) has a converged optimal solution. 
According to Theorem \ref{theorem_DNMPC_convergence_in_N_steps_dynamic}, the Problem (\ref{equation_optimization_DNMPC}) guarantees convergence without any restriction on the weight matrices except that they should belong to the positive semi-definite cone (see Corollary \ref{corollary_metric_matrix_epsilon_positive_definite}). 
According to Corollary \ref{corollary_espilon_positive_definite_subset}, the $\varepsilon$-positive definite cone is a subset of the positive semi-definite cone. 
Hence, the weight matrices have valid constraints for stability and convergence of the problem. 
Moreover, Eq. (\ref{equation_ADMM_update_U}) guarantees that the control input obtained from the ADMM updates satisfies Theorem \ref{theorem_DNMPC_convergence_in_N_steps_dynamic}. 
Noticing that the Problem (\ref{equation_optimization_DNMPC}) is stable, according to Proposition \ref{lemma_ML_stability}, the proof is complete. 
\end{proof}

\section{Simulations}\label{section_simulations}

\subsection{Synthetic Data}

For validating the proposed method, we made a heterogeneous platoon, including seven FVs ($N = 7$) at the initial time.
Following the dataset in \cite{zheng2017distributed}, we set the initial position and velocity of the leader as $s_0(0)=0$, $v_0(0)= 20$ m/s. The velocity of LV is considered as $20$ m/s, $20 + 2(t-1)$, and $22$ for $t \leq 1$ s, $t \in (1,2]$ s, and $t > 2$ s, respectively. 
The parameters of the FVs are set as in \cite{zheng2017distributed} (cf. {\cite[Table I]{zheng2017distributed}}). The sampling time is $\Delta t = 0.1$s, the horizon length is $N_p = 20$, and the desired gap is $d=10$m.

For the dynamic platooning, we introduced a cut-in between the first and second FVs at time $t=2$s and a cut-out of the fourth FV from the platoon at time $t=4$s. The mass, $\tau_i$, $C_{A,i}$, and $r_i$ of the cut-in vehicle were randomly set to be $1305.9$kg, $0.63$s, $1$, and $0.4$m, respectively. 

\subsection{Dynamic Platoon Control with Cut-in/Cut-out Maneuvers}

% Our implementation of the proposed method was built on top of the code, which can be found in \cite{YangDMPCcode}. 
The results of the simulations are shown in Fig. \ref{figure_results_convergence} where speed, relative spacing error with the preceding vehicle, and absolute position are illustrated for the four different topologies. 
As the absolute positions show, no collision has occurred in the platoon as expected. By reducing its speed, the second FV has increased its gap with the first FV to make the desired distance of $10$m from the cut-in vehicle. Consequently, the following vehicles have lessened their velocity to keep the desired distance. The plots of speed verify this fact. Moreover, the relative spacing error shows the jump in the distance error because of the cut-in maneuver. A similar analysis exists for the cut-out maneuver where the following vehicles have increased their velocity to reach the desired distance from the vehicles in front. 
As expected, the spacing error for the cut-out maneuver has an opposite sign with respect to the cut-in error. 
Furthermore, Fig. \ref{figure_results_convergence} shows that convergence has been reached in $t_\text{conv}=11$s which coincides with Theorem \ref{theorem_DNMPC_convergence_in_N_steps_dynamic} because $t_\text{conv}=\max(2,4)+7+1-1=11$s.

\subsection{Driving Behavior Analysis in the Dynamic Platoon}

Here, we analyze the driving behavior of the platoon. Four different behaviors, i.e., driving comfort, fuel economy, relative convergence, and absolute convergence, are analyzed. 
For the analysis, we use distributed metric learning, using ADMM optimization, to learn the metric subspaces (see Section \ref{section_metric_learning}). 
In the ADMM optimization, the weight matrices were initialized randomly in the $\varepsilon$-positive definite cone to be feasible. The number of iterations in both ADMM and gradient descent was $10$, and the learning rate and $\rho$ were both set to $0.1$. We used $\varepsilon = 0.01$ in our simulations.

\subsubsection{Driving Comfort} 

According to Eq. (\ref{equation_objective}), the metric with weight $\b{F}$ calculates the difference of $\b{y}_i^p$ and $\b{y}_i^a$. The less this difference is, the more comfortable the driving will be because the predicted and assumed position (and velocity) are closer.   
Figure \ref{figure_results_subspaces_behavior} (rows 1 and 2) depicts the projected values of predicted and assumed positions (and velocities) onto the metric subspace (see Lemma \ref{lemma_metric_subspace_projection}). 
In this figure, merely the values of the first, last, and most impacted FVs (cut-in car and cars 2 and 5) by the dynamic maneuvers are illustrated for the sake of brevity. 
The figures are provided for times $t=1$s (before cut-in), $t=2$s (at cut-in), $t=4$s (at cut-out), and $t=7$s (after cut-out). 
In the plots for the weight $\b{F}$, for every vehicle, the left and right points are the corresponding $\b{y}_i^p$ and $\b{y}_i^a$ in the horizon connected by lines to show their difference.
Similar notation is used for the plots of the metrics with other weight matrices explained in the following subsections.
For every weight matrix, the subspace of one of the topologies is shown due to the lack of space.

For better driving comfort, the lines should be more horizontal to have less difference between the predicted and assumed outputs. Also, more vertically compact points show a smoother change in the horizon. Hence, more horizontal lines and vertically compact plots indicate more comfort (similar analysis exists for the subspaces of other weights). 
As seen in Fig. \ref{figure_results_subspaces_behavior} (rows 1 and 2), chaos in comfort has occurred for FV 2 and 7, caused by the cut-in and cut-out maneuvers, at times $t=2$s and $t=4$s, respectively. However, the driving comfort is improved by passing the time and progress in the algorithm. 

\subsubsection{Fuel Economy} 

As in Eq. (\ref{equation_objective}), the metric having $R$ as its weight measures the difference of the predicted control input ($u_i^p$) from its equilibrium control input ($h_i(v_i^p)$). The larger difference requires more fuel consumption because of more abrupt changes in the control input; hence, projection onto the subspace of this metric indicates the fuel economy. In Fig. \ref{figure_results_subspaces_behavior} (row 3), the changes of the equilibrium control input are small, as expected, but the chaos caused by the dynamic maneuvers results in more sparse changes in predicted control input for the most affected vehicles. 

\subsubsection{Absolute and Relative Convergence} 

The metrics with weights $\b{Q}$ and $\b{G}$, in Eq. (\ref{equation_objective}), are responsible for the absolute and relative convergence because they measure the difference of the predicted output ($\b{y}_i^p$) from the desired output ($\b{y}_{\text{des},i}$) and shifted output of the neighbor vehicles ($\b{y}_j^a - \widetilde{\b{d}}_{i,j}$), respectively. 
In Fig. \ref{figure_results_subspaces_convergence} for both of subspaces, we see the lines of both position and velocity plots become less horizontal and compact at the times of dynamic maneuvers. However, the progress of the algorithm alleviates the effect of chaos. 

\section{Conclusion and Future Directions}\label{section_conclusion}

Dynamic heterogeneous vehicle platooning is one of the important tasks in autonomous control. In this paper, we proposed a DNMPC-based approach, based on the technique introduced in \cite{zheng2017distributed}, for handling possible cut-in/cut-out maneuvers and avoiding collisions by tracking the desired velocity and maintaining the safe desired gap among the vehicles.
We derived the convergence time of the DNMPC for dynamic platooning based on the time of maneuvers. Furthermore, we analyzed driving experience factors such as driving comfort, fuel economy, and absolute and relative convergence of the method using distributed metric learning and ADMM optimization. Our simulations on a dynamic platoon with cut-in and cut-out maneuvers with different topologies validated the effectiveness of the method. As a future direction, the string stability can also be analyzed in the DNMPC. Furthermore, the method can be generalized by considering lateral vehicle dynamics.

\section*{Acknowledgment}
The authors would like to thank Dr. Yang Zheng (SEAS and CGBC at Harvard University, Cambridge, MA, USA) for the fruitful discussions during this work. 
This work is partially supported by NSERC of Canada.
% Also, the authors would like to thank the Natural Sciences and Engineering Research Council (NSERC) of Canada for their support of this research.

\bibliographystyle{IEEEtran}
\balance
\bibliography{references}

% \begin{thebibliography}{99}

% \bibitem{c1}
% J.G.F. Francis, The QR Transformation I, {\it Comput. J.}, vol. 4, 1961, pp 265-271.

% \bibitem{c2}
% H. Kwakernaak and R. Sivan, {\it Modern Signals and Systems}, Prentice Hall, Englewood Cliffs, NJ; 1991.

% \bibitem{c3}
% D. Boley and R. Maier, "A Parallel QR Algorithm for the Non-Symmetric Eigenvalue Algorithm", {\it in Third SIAM Conference on Applied Linear Algebra}, Madison, WI, 1988, pp. A20.

% \end{thebibliography}

\end{document}